\documentclass{article}

\usepackage{arxiv}

\usepackage[utf8]{inputenc}
\usepackage[T1]{fontenc}
\usepackage{microtype}
\usepackage{amsmath,amssymb,amsfonts,amsthm,mathtools,bm, mathrsfs, subcaption, float, hyperref}
\numberwithin{equation}{section}

\usepackage{booktabs}
\usepackage{enumitem}
\usepackage[authoryear,sort&compress]{natbib}

\usepackage[nameinlink,noabbrev]{cleveref}
\usepackage{url}
\usepackage{doi}

\hypersetup{
pdftitle={Non-Asymptotic Variational Learning for Monotone Nonlinear Multiscale Elliptic Equations: Scale-Robust Primal--Dual Bounds and Strong-Form Statistical Ill-Conditioning
},
pdfauthor={Ronald Katende},
pdfsubject={Non-asymptotic approximation, generalization, and optimization theory for uniformly monotone nonlinear multiscale elliptic equations
},
pdfkeywords={physics-informed neural networks, scientific machine learning, nonlinear elliptic equations, periodic homogenization, multiscale partial differential equations, variational neural methods, primal-dual learning, Rademacher complexity, non-asymptotic error bounds, statistical ill-conditioning } }

\newcommand{\R}{\mathbb{R}}
\newcommand{\E}{\mathbb{E}}
\newcommand{\Pp}{\mathbb{P}}
\newcommand{\Om}{\Omega}
\newcommand{\eps}{\varepsilon}
\newcommand{\V}{V}
\newcommand{\J}{\mathcal{J}}
\newcommand{\Jh}{\widehat{\mathcal{J}}}
\newcommand{\A}{\mathcal{A}}
\newcommand{\B}{\mathbb{B}}

\newcommand{\Radh}{\widehat{\mathfrak{R}}}
\newcommand{\norm}[1]{\left\lVert #1\right\rVert}
\newcommand{\abs}[1]{\left|#1\right|}

\newcommand{\dd}{\,\mathrm{d}}
\newcommand{\grad}{\nabla}
\newcommand{\diver}{\nabla\!\cdot}
\newcommand{\esssup}{\operatorname*{ess\,sup}}

\newtheorem{theorem}{Theorem}[section]
\newtheorem{lemma}[theorem]{Lemma}
\newtheorem{proposition}[theorem]{Proposition}

\theoremstyle{definition}
\newtheorem{assumption}[theorem]{Assumption}
\newtheorem{definition}[theorem]{Definition}

\theoremstyle{remark}
\newtheorem{remark}[theorem]{Remark}

\crefname{figure}{Figure}{Figures}
\Crefname{figure}{Figure}{Figures}

\crefname{subfigure}{Figure}{Figures}
\Crefname{subfigure}{Figure}{Figures}

\crefname{table}{Table}{Tables}
\Crefname{table}{Table}{Tables}

\crefname{section}{Section}{Sections}
\Crefname{section}{Section}{Sections}

\crefname{subsection}{Section}{Sections}
\Crefname{subsection}{Section}{Sections}

\crefname{equation}{Equation}{Equations}
\Crefname{equation}{Equation}{Equations}

\crefname{theorem}{Theorem}{Theorems}
\Crefname{theorem}{Theorem}{Theorems}

\crefname{lemma}{Lemma}{Lemmas}
\Crefname{lemma}{Lemma}{Lemmas}

\crefname{proposition}{Proposition}{Propositions}
\Crefname{proposition}{Proposition}{Propositions}

\crefname{corollary}{Corollary}{Corollaries}
\Crefname{corollary}{Corollary}{Corollaries}

\crefname{assumption}{Assumption}{Assumptions}
\Crefname{assumption}{Assumption}{Assumptions}

\crefname{definition}{Definition}{Definitions}
\Crefname{definition}{Definition}{Definitions}

\crefname{example}{Example}{Examples}
\Crefname{example}{Example}{Examples}

\crefname{remark}{Remark}{Remarks}
\Crefname{remark}{Remark}{Remarks}

\title{
Non-Asymptotic Variational Learning for Monotone Nonlinear Multiscale
Elliptic Equations:
Scale-Robust Primal--Dual Bounds and Strong-Form Statistical Ill-Conditioning
}

\author{
Ronald Katende \\
Department of Mathematics \\
Kabale University \\
Kikungiri Hill, Katuna Road, Kabale, Uganda \\
\texttt{rkatende@kab.ac.ug}
}

\date{}

\renewcommand{\headeright}{}
\renewcommand{\undertitle}{}
\renewcommand{\shorttitle}{
Non-Asymptotic Variational Learning for Monotone Nonlinear Multiscale Elliptic Equations
}

\begin{document}

\maketitle
\begin{abstract}
	We develop a non-asymptotic approximation, sampling, and finite-iteration optimization theory for variational physics-informed approximation of uniformly monotone nonlinear multiscale elliptic equations. For boundary-compatible neural feature classes, the population error splits into approximation, empirical quadrature, and projected-gradient terms, with all non-approximation constants uniform in the microscopic scale \(\varepsilon\).
	
	Assuming a quantitative corrected \(H^1\)-estimate, a two-scale state class yields
	\[
	\mathcal A_m^\varepsilon
	\le C\bigl(\varepsilon+\Phi_{0,m_0}^2+\Phi_{1,m_1}^2\bigr)
	\]
	in arbitrary dimension. We further introduce a convex primal--dual physics loss whose population value is a computable upper certificate for the state error. With additional flux-corrector regularity, a divergence-compatible two-scale flux class gives a certified state--flux bound combining \(O(\varepsilon)\) approximation, state and flux feature errors, empirical sampling error, and an \(O(K^{-1})\) optimization term.
	
	In contrast, for general periodic nonlinear fluxes satisfying a natural nondegeneracy condition, the empirical Rademacher complexities of strong-residual and squared-residual classes are bounded below by constant multiples of \((\varepsilon\sqrt N)^{-1}\) and \((\varepsilon^2\sqrt N)^{-1}\), respectively. These optimizer-independent lower bounds hold in every spatial dimension.
	
	Numerical experiments confirm the predicted \(\varepsilon\)- and \(N\)-scalings for nonlinear fluxes in \(d=1,2,3\), validate every computed primal--dual certificate, and show that corrector-enriched classes substantially reduce energy and \(H^1\) errors as the microscopic scale is refined.
\end{abstract}

\keywords{
variational physics-informed learning
\and nonlinear elliptic equations
\and periodic homogenization
\and primal--dual gap
\and Rademacher complexity
\and statistical ill-conditioning
\and corrector-enriched approximation
\and non-asymptotic error bounds
}

\section{Introduction}
\label{sec:introduction}

Physics-informed neural methods approximate PDE solutions by minimizing sampled physics losses. A non-asymptotic analysis must therefore control three distinct errors: approximation by the trial class, replacement of the population loss by finitely many samples, and incomplete numerical optimization. Stability-based generalization estimates for PINNs have been developed for several classes of PDEs \citep{MishraMolinaro2023}. Variational neural methods, including the Deep Ritz and VPINN frameworks, avoid the highest-order derivatives appearing in strong residuals \citep{EYu2018,KharazmiZhangKarniadakis2019}. These theories do not by themselves resolve how a microscopic coefficient changes the finite-sample loss class for a genuinely nonlinear divergence-form equation.

The distinction is formulation-dependent. If the flux is $A(x/\eps,\grad v)$, then the strong residual contains 
\[
\diver_x A\!\left(\frac{x}{\eps},\grad v\right)
=
\frac1\eps
\diver_yA\!\left(\frac{x}{\eps},\grad v\right)
+
D_\xi A\!\left(\frac{x}{\eps},\grad v\right):D^2v.
\]
Thus coefficient differentiation creates an explicit $\eps^{-1}$ term. For oscillatory linear elliptic equations, growth of order $\eps^{-2}$ has been identified in a neural tangent-kernel quantity \citep{GangalKimCarney2023}. That is a statement about local parameter-space training dynamics. It does not imply a lower bound for finite-sample fluctuations of the residual or squared-residual function classes. Conversely, a function-class lower bound does not imply an optimization rate. The present paper separates these effects.

The upper theory is variational. We first prove a scale-robust approximation--sampling--optimization decomposition for uniformly monotone nonlinear multiscale energies. We then close the remaining approximation term in arbitrary dimension by a corrector-enriched two-scale trial class. Periodic correctors and quantitative convergence estimates for nonlinear monotone operators are classical \citep{DalMasoDefranceschi1990,Bystrom2001,WangXuZhao2018}; the contribution here is their scale-explicit integration into a finite-sample, finite-iteration learning bound.

We next derive a first-order primal--dual physics loss. Primal--dual gaps are known to provide reliable energy certificates for uniformly convex minimization problems \citep{BartelsMilicevic2020}. Related mixed-residual and first-order least-squares neural formulations have been analysed for linear elliptic equations and general linear first-order systems \citep{LiTaiYangZhu2022,BersetcheBorthagaray2022, 	OpschoorPetersenSchwab2024}. Their objectives and stability mechanisms differ from the nonlinear Fenchel-gap formulation developed here. In particular, they do not provide the combination of a nonlinear periodic state--flux certificate, \(\eps\)-uniform finite-sample constants, and a divergence-compatible multiscale flux closure proved below. Here the gap is combined with finite neural state and flux classes, uniform empirical quadrature, and projected-gradient convergence. A nonlinear flux-corrector construction then closes both state and flux approximation terms while preserving $\eps$-uniform divergence envelopes.

Finally, we prove a general strong-form lower bound. For any fixed smooth trial state whose microscopic flux divergence is nondegenerate, a two-point residual class has empirical Rademacher complexity of order at least $\eps^{-1}N^{-1/2}$, while the corresponding squared-loss class has order at least $\eps^{-2}N^{-1/2}$. The result holds for nonlinear periodic fluxes in arbitrary dimension and is independent of the optimizer.

\paragraph{Main contributions.}
The paper proves four results.
\begin{enumerate}[leftmargin=2em]
\item A non-asymptotic primal variational estimate
\[
\norm{v_{\widehat\beta_K}-u^\eps}_{\V}^2
\leq
\frac2\alpha
\left[
\A_m^\eps
+
2\Delta_{N,\delta}
+
\frac{2L_{\mathrm{par}}R^2}{K}
\right],
\]
whose stability, sampling, and optimization constants are independent of
$\eps$.

\item Under quantitative periodic-corrector regularity, a multidimensional closure
\[
\A_m^\eps
\leq
C_{\mathrm{ms}}
\left(
\eps+\Phi_{0,m_0}^2+\Phi_{1,m_1}^2
\right).
\]

\item A convex primal--dual end-to-end theorem with a computable gap certificate and a divergence-compatible flux closure
\[
\mathfrak Q_n^\eps
\leq
C_{\mathrm{flux}}
\left(
\eps+\Psi_{0,n_0}^2+\Psi_{1,n_1}^2
\right).
\]

\item General-dimensional lower bounds
\[
\Radh_N(\mathcal R_{\eps,\star};S)
\gtrsim
\frac1{\eps\sqrt N},
\qquad
\Radh_N(\mathcal L_{\eps,\star};S)
\gtrsim
\frac1{\eps^2\sqrt N}
\]
for the strong residual and squared strong loss.
\end{enumerate}

\paragraph{Scope.}
The theory concerns uniformly monotone nonlinear divergence-form equations with convex energy structure. The optimized models are finite feature classes that remain linear in their trainable coefficients; this gives a global finite-iteration guarantee. The paper does not claim global optimization of unrestricted all-weights deep networks, nor does it cover non-monotone energies, fully nonlinear operators, or coupled systems. The strong-form lower bound is established for a class containing one \(\eps\)-independent trial state whose microscopic flux divergence is nondegenerate. It does not exclude coefficient-adapted or \(\eps\)-dependent trial families that suppress this microscopic divergence. Determining when such adapted architectures eliminate the obstruction, and what approximation or statistical complexity is required to do so, is a distinct open problem.

\section{Primal variational learning}
\label{sec:primal}

\subsection{Nonlinear multiscale energy}

Let $\Om\subset\R^d$ be bounded and Lipschitz, let $Y=(0,1)^d$, and set
\[
\V:=H_0^1(\Om),
\qquad
\norm{v}_{\V}:=\norm{\grad v}_{L^2(\Om)}.
\]
For $0<\eps\leq1$ and $f\in L^\infty(\Om)$, consider
\begin{equation}
\label{eq:pde-general}
-\diver\!\left[
\partial_\xi F\!\left(x,\frac{x}{\eps},\grad u^\eps\right)
\right]
+
\partial_sG(x,u^\eps)
=f
\quad\text{in }\Om,
\qquad
u^\eps=0
\quad\text{on }\partial\Om.
\end{equation}
Its energy is
\begin{equation}
\label{eq:energy-general}
\J_\eps(v)
:=
\int_\Om
\left[
F\!\left(x,\frac{x}{\eps},\grad v(x)\right)
+
G(x,v(x))
-
f(x)v(x)
\right]\dd x.
\end{equation}

\begin{assumption}[Uniform monotone structure]
\label{ass:structure}
The functions $F$ and $G$ are measurable in their spatial variables and
twice continuously differentiable in $\xi$ and $s$, respectively. There
exist $\alpha>0$ and $\mu\geq0$, independent of $\eps$, such that
\begin{equation}
\label{eq:hessian-lower}
\partial_{\xi\xi}^2F(x,y,\xi)\succeq\alpha I_d,
\qquad
\partial_{ss}^2G(x,s)\geq\mu
\end{equation}
for almost every $(x,y)$ and every $(\xi,s)$. For every $\eps\in(0,1]$,
the functional $\J_\eps:\V\to(-\infty,\infty]$ is proper, coercive, and
sequentially weakly lower semicontinuous.
\end{assumption}

Under \cref{ass:structure}, $\J_\eps$ has a unique minimizer
$u^\eps\in\V$, and its Euler equation is \eqref{eq:pde-general}. A
representative genuinely nonlinear example is
\[
F(y,\xi)
=
\frac12\xi^\top A(y)\xi
+
\gamma\left(\sqrt{1+\abs{\xi}^2}-1\right),
\qquad
G(s)=\frac\lambda2s^2+\frac14s^4,
\]
where $A$ is periodic and uniformly elliptic, $\gamma\geq0$, and
$\lambda>0$.

\begin{lemma}[Scale-independent energy certificate]
\label{lem:energy-certificate}
Under \cref{ass:structure}, every $v\in\V$ satisfies
\begin{equation}
\label{eq:energy-certificate}
\J_\eps(v)-\J_\eps(u^\eps)
\geq
\frac\alpha2\norm{v-u^\eps}_{\V}^2
+
\frac\mu2\norm{v-u^\eps}_{L^2(\Om)}^2.
\end{equation}
\end{lemma}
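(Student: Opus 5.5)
The plan is a second-order Taylor expansion with integral remainder about the minimizer, combined with the Euler equation. Fix $v\in\V$ with $\J_\eps(v)<\infty$; if $\J_\eps(v)=+\infty$, the inequality is trivial because $\J_\eps(u^\eps)$ is finite. Set $w:=v-u^\eps\in\V$. For almost every $x$, apply Taylor's formula with integral remainder to $\xi\mapsto F(x,x/\eps,\xi)$ at $\grad u^\eps(x)$ and to $s\mapsto G(x,s)$ at $u^\eps(x)$. This is legitimate because both maps are $C^2$ by \cref{ass:structure}. It gives
\begin{align*}
F\bigl(x,\tfrac{x}{\eps},\grad v\bigr)-F\bigl(x,\tfrac{x}{\eps},\grad u^\eps\bigr)
&=\partial_\xi F\bigl(x,\tfrac{x}{\eps},\grad u^\eps\bigr)\cdot\grad w
+\int_0^1(1-t)\,\grad w^\top\partial^2_{\xi\xi}F(\cdots)\,\grad w\dd t,\\
G(x,v)-G(x,u^\eps)
&=\partial_sG(x,u^\eps)\,w+\int_0^1(1-t)\,\partial^2_{ss}G(\cdots)\,w^2\dd t .
\end{align*}
By \eqref{eq:hessian-lower}, the two remainders are bounded below pointwise by $\tfrac\alpha2\abs{\grad w}^2$ and $\tfrac\mu2 w^2$. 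Since $\int_0^1(1-t)\dd t=\tfrac12$, this constant is exactly where the factor $\tfrac12$ in \eqref{eq:energy-certificate} comes from.

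Next, integrate over $\Om$ and subtract the linear term $\int_\Om fw$. The first-order contributions then combine into
\[
\int_\Om\Bigl[\partial_\xi F\bigl(x,\tfrac{x}{\eps},\grad u^\eps\bigr)\cdot\grad w+\partial_sG(x,u^\eps)\,w-fw\Bigr]\dd x .
\]
This is the Gâteaux derivative of $\J_\eps$ at $u^\eps$ in the direction $w$, and it vanishes by the weak Euler equation for \eqref{eq:pde-general}. The paper states that $u^\eps$ satisfies this equation, with $w\in H_0^1(\Om)$ as an admissible test function. Summing the remaining terms gives \eqref{eq:energy-certificate} with no dependence on $\eps$, because $\alpha$ and $\mu$ are uniform.

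The main obstacle is rigor in the first-order step. We need the integrands $\partial_\xi F(\cdot,\cdot/\eps,\grad u^\eps)\cdot\grad w$ and $\partial_sG(\cdot,u^\eps)w$ to be integrable, and we need the Euler equation to hold when tested against an arbitrary $w\in\V$, not just against smooth test functions. No growth bounds are assumed, so I would avoid differentiating under the integral sign directly and use convexity along the segment instead. Set $\phi(t):=\J_\eps(u^\eps+tw)$ on $[0,1]$. The map $\phi$ is convex, finite at both endpoints, and hence finite throughout, and it is minimized at $t=0$.

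Apply the pointwise Taylor inequality to $\xi=\grad u^\eps+t\grad w$ and $s=u^\eps+tw$, i.e.\ with $tw$ in place of $w$. Use the resulting inequality only in the form of a lower bound for the difference quotient $(\phi(t)-\phi(0))/t$. Then let $t\downarrow0$, using monotone convergence on the convex difference quotients, which are monotone in $t$. Minimality of $u^\eps$ gives
\[
\phi(1)-\phi(0)\ \ge\ \lim_{t\downarrow0}\frac{\phi(t)-\phi(0)}{t}+\frac\alpha2\norm{w}_\V^2+\frac\mu2\norm{w}_{L^2(\Om)}^2,
\]
with the limit nonnegative by minimality. Equivalently, one applies the strong-convexity inequality $\phi(1)-\phi(0)\ge\phi'(0^+)+\tfrac12(\alpha\norm{w}_\V^2+\mu\norm{w}_{L^2(\Om)}^2)$ to the convex function $\phi$ and uses $\phi'(0^+)\ge0$. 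The point of this formulation is that it bypasses the need for growth conditions on $\partial_\xi F$ and $\partial_sG$.
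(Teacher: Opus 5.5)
Your proof is correct and is essentially the paper's argument. The paper integrates the pointwise strong-convexity inequalities for $F$ and $G$ at $(\grad v,\grad u^\eps,v,u^\eps)$, which you derive via Taylor's formula with integral remainder and $\int_0^1(1-t)\dd t=\tfrac12$, and it disposes of the first-order terms by the optimality of $u^\eps$; your segment argument, using only $\phi'(0^+)\ge0$ and the convexity of $t\mapsto\phi(t)-\tfrac{t^2}{2}\bigl(\alpha\norm{w}_{\V}^2+\mu\norm{w}_{L^2(\Om)}^2\bigr)$, is a sound and more careful justification of the step the paper only asserts (``the first-order terms cancel''), since without growth conditions only this one-sided inequality, not the two-sided Euler equation tested against arbitrary $w\in\V$, is guaranteed.
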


\begin{proof}
Strong convexity gives, pointwise,
\begin{align*}
F(x,y,\xi)
&\geq
F(x,y,\eta)
+
\partial_\xi F(x,y,\eta)\cdot(\xi-\eta)
+
\frac\alpha2\abs{\xi-\eta}^2,
\\
G(x,r)
&\geq
G(x,s)
+
\partial_sG(x,s)(r-s)
+
\frac\mu2\abs{r-s}^2.
\end{align*}
Set $(\xi,\eta,r,s)=(\grad v,\grad u^\eps,v,u^\eps)$ and integrate.
The first-order terms cancel with the forcing term because $u^\eps$ is
the minimizer of $\J_\eps$.
\end{proof}

\subsection{Boundary-compatible finite feature classes}

Let $z_m:\Om\to\R^m$ satisfy $z_m=0$ on $\partial\Om$, and write
\[
Z_m(x):=\grad z_m(x)\in\R^{d\times m}.
\]
For $R>0$, define
\begin{equation}
\label{eq:trial-class}
v_\beta(x):=\beta^\top z_m(x),
\qquad
\beta\in\B_R^m
:=
\left\{
\beta\in\R^m:
\norm{\beta}_2\leq R
\right\}.
\end{equation}
Only the output coefficients are trained. Set
\begin{equation}
\label{eq:feature-envelopes}
B_0
:=
\esssup_{x\in\Om}\norm{z_m(x)}_2,
\qquad
B_1
:=
\esssup_{x\in\Om}\norm{Z_m(x)}_{\mathrm{op}}.
\end{equation}
Thus
\[
\abs{v_\beta}\leq RB_0,
\qquad
\abs{\grad v_\beta}\leq RB_1.
\]

On these ranges define
\begin{align}
M_F
&:=
\esssup_{x,y,\abs{\xi}\leq RB_1}
\abs{\partial_\xi F(x,y,\xi)},
&
L_F
&:=
\esssup_{x,y,\abs{\xi}\leq RB_1}
\norm{\partial_{\xi\xi}^2F(x,y,\xi)}_{\mathrm{op}},
\label{eq:F-constants}\\
M_G
&:=
\esssup_{x,\abs{s}\leq RB_0}
\abs{\partial_sG(x,s)},
&
L_G
&:=
\esssup_{x,\abs{s}\leq RB_0}
\abs{\partial_{ss}^2G(x,s)}.
\label{eq:G-constants}
\end{align}
Assume these quantities and
\begin{equation}
\label{eq:zero-envelope}
C_0
:=
\esssup_{x,y}\abs{F(x,y,0)}
+
\esssup_x\abs{G(x,0)}
\end{equation}
are finite.

For
\begin{equation}
\label{eq:integrand-class}
h_\beta^\eps(x)
:=
F\!\left(x,\frac{x}{\eps},Z_m(x)\beta\right)
+
G\bigl(x,z_m(x)^\top\beta\bigr)
-
f(x)z_m(x)^\top\beta,
\end{equation}
set
\begin{align}
L_h
&:=
M_FB_1
+
\left(
M_G+\norm f_{L^\infty(\Om)}
\right)B_0,
\label{eq:Lh}\\
M_h
&:=
C_0+RL_h.
\label{eq:Mh}
\end{align}
Then
\begin{equation}
\label{eq:integrand-Lipschitz}
\abs{
h_\beta^\eps(x)-h_{\widetilde\beta}^\eps(x)
}
\leq
L_h\norm{\beta-\widetilde\beta}_2,
\qquad
\abs{h_\beta^\eps(x)}\leq M_h.
\end{equation}
Neither constant contains a derivative in the fast variable.

Let $X_1,\ldots,X_N$ be independent uniform samples on $\Om$, and define
\begin{equation}
\label{eq:empirical-energy}
\Jh_{\eps,N}(v_\beta)
:=
\frac{\abs{\Om}}{N}
\sum_{i=1}^N
h_\beta^\eps(X_i).
\end{equation}

\begin{lemma}[Uniform empirical quadrature]
\label{lem:uniform-quadrature}
For $N\geq1$ and $0<\delta<1$, with probability at least $1-\delta$,
\begin{equation}
\label{eq:delta-bound}
\sup_{\beta\in\B_R^m}
\abs{
\J_\eps(v_\beta)-\Jh_{\eps,N}(v_\beta)
}
\leq
\Delta_{N,\delta},
\end{equation}
where
\begin{equation}
\label{eq:Delta-explicit}
\Delta_{N,\delta}
:=
\abs{\Om}
\left[
\frac{2L_hR}{N}
+
M_h
\sqrt{
\frac{
2\left[
m\log(1+2N)+\log(2/\delta)
\right]
}{N}
}
\right].
\end{equation}
The bound is independent of $\eps$.
\end{lemma}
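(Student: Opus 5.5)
The plan is a standard covering-number argument combined with Hoeffding's inequality and a union bound. The form of $\Delta_{N,\delta}$ is suggestive. The term $2L_hR/N$ is a discretization error at mesh $\rho=R/N$. The factor $m\log(1+2N)$ is the logarithm of the size of a $\rho$-net of $\B_R^m$, since the volumetric bound gives at most $(1+2R/\rho)^m=(1+2N)^m$ points.

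\textbf{Step 1 (reduction to a finite net).} Fix $\rho:=R/N$ and a $\rho$-net $\{\beta_1,\ldots,\beta_M\}\subset\B_R^m$ with $M\le(1+2N)^m$. Given any $\beta\in\B_R^m$, pick $\beta_j$ with $\norm{\beta-\beta_j}_2\le\rho$. The Lipschitz bound \eqref{eq:integrand-Lipschitz} holds pointwise and uniformly in $\eps$. Hence $\abs{\J_\eps(v_\beta)-\J_\eps(v_{\beta_j})}\le\abs{\Om}L_h\rho$, because $\J_\eps(v_\beta)=\int_\Om h^\eps_\beta$, and the same bound holds for the empirical functional. Therefore
\[
\sup_{\beta}\abs{\J_\eps(v_\beta)-\Jh_{\eps,N}(v_\beta)}\le \frac{2\abs{\Om}L_hR}{N}+\max_j\abs{\J_\eps(v_{\beta_j})-\Jh_{\eps,N}(v_{\beta_j})}.
\]

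\textbf{Step 2 (concentration at net points).} For fixed $j$, the variables $Y_i:=\abs{\Om}h^\eps_{\beta_j}(X_i)$ are i.i.d. with mean $\J_\eps(v_{\beta_j})$, since $X_i$ is uniform on $\Om$. By \eqref{eq:integrand-Lipschitz} they take values in $[-\abs{\Om}M_h,\abs{\Om}M_h]$. Hoeffding's inequality gives
\[
\Pp\bigl(\abs{\bar Y-\E Y}>t\bigr)\le 2\exp\bigl(-Nt^2/(2\abs{\Om}^2M_h^2)\bigr).
\]
A union bound over the $M$ net points, with $t=\abs{\Om}M_h\sqrt{2[m\log(1+2N)+\log(2/\delta)]/N}$, makes the total failure probability at most $2M e^{-m\log(1+2N)}\delta/2\le\delta$. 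Combining this with Step 1 yields \eqref{eq:delta-bound} with exactly \eqref{eq:Delta-explicit}.

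\textbf{Step 3 (measurability and scale independence).} Measurability of the supremum is harmless: $\beta\mapsto h^\eps_\beta(x)$ is continuous, so the supremum can be taken over a countable dense subset of $\B_R^m$. The $\eps$-independence follows because only $L_h$, $M_h$, $R$, $m$, and $\abs{\Om}$ enter the bound, and none of these involves differentiation in the fast variable.

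The only delicate point is the bookkeeping of constants in Step 2. Hoeffding's inequality for range width $2\abs{\Om}M_h$ gives the exponent $2N t^2/(2\abs{\Om}M_h)^2$. This must be matched with the union-bound count so that the stated $\sqrt{2[\cdots]/N}$ comes out. I expect it does, with some slack. Everything else is routine.
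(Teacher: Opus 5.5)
Your proposal is correct and follows the paper's proof essentially step for step. Both use an $(R/N)$-net of $\B_R^m$ of cardinality at most $(1+2N)^m$, Hoeffding's inequality with exponent $Nt^2/(2M_h^2)$ at each net point, a union bound, and the Lipschitz estimate \eqref{eq:integrand-Lipschitz} contributing $2L_hR/N$, with everything scaled by $\abs{\Om}$. Your Step~2 bookkeeping matches \eqref{eq:Delta-explicit} exactly, with the only slack coming from $M\le(1+2N)^m$, and the measurability remark is a harmless addition that the paper omits.
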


\begin{proof}
Let $\mathcal N$ be an $(R/N)$-net of $\B_R^m$. A volumetric estimate
gives
\[
\abs{\mathcal N}\leq(1+2N)^m.
\]
For a fixed $\widetilde\beta\in\mathcal N$, Hoeffding's inequality and
\eqref{eq:integrand-Lipschitz} imply
\[
\Pp\left(
\abs{(\E-P_N)h_{\widetilde\beta}^\eps}>t
\right)
\leq
2\exp\left(
-\frac{Nt^2}{2M_h^2}
\right).
\]
A union bound controls every net point. For arbitrary
$\beta\in\B_R^m$, choose
$\widetilde\beta\in\mathcal N$ with
$\norm{\beta-\widetilde\beta}_2\leq R/N$. The Lipschitz estimate adds
at most $2L_hR/N$ when passing from the net to the full ball.
Multiplication by $\abs{\Om}$ proves \eqref{eq:Delta-explicit}.
\end{proof}

\subsection{Finite-iteration convergence}

Define
\[
\widehat j_{\eps,N}(\beta)
:=
\Jh_{\eps,N}(v_\beta).
\]
The empirical objective is convex and
\begin{equation}
\label{eq:parameter-smoothness}
\norm{
\grad^2\widehat j_{\eps,N}(\beta)
}_{\mathrm{op}}
\leq
L_{\mathrm{par}}
:=
\abs{\Om}
\left(
L_FB_1^2+L_GB_0^2
\right).
\end{equation}
Projected gradient descent
\begin{equation}
\label{eq:pgd}
\widehat\beta_{k+1}
=
\Pi_{\B_R^m}
\left(
\widehat\beta_k
-
\frac1{L_{\mathrm{par}}}
\grad\widehat j_{\eps,N}(\widehat\beta_k)
\right)
\end{equation}
therefore satisfies
\begin{equation}
\label{eq:pgd-gap}
\widehat j_{\eps,N}(\widehat\beta_K)
-
\min_{\beta\in\B_R^m}
\widehat j_{\eps,N}(\beta)
\leq
\frac{2L_{\mathrm{par}}R^2}{K}.
\end{equation}

Define
\begin{equation}
\label{eq:approximation-error}
\A_m^\eps
:=
\inf_{\beta\in\B_R^m}
\left[
\J_\eps(v_\beta)-\J_\eps(u^\eps)
\right].
\end{equation}

\begin{theorem}[Scale-robust primal end-to-end bound]
\label{thm:end-to-end}
Under \cref{ass:structure} and
\eqref{eq:F-constants}--\eqref{eq:zero-envelope}, let
$\widehat\beta_K$ be generated by \eqref{eq:pgd}. With probability at
least $1-\delta$,
\begin{equation}
\label{eq:end-to-end}
\boxed{
\norm{v_{\widehat\beta_K}-u^\eps}_{\V}^2
\leq
\frac2\alpha
\left[
\A_m^\eps
+
2\Delta_{N,\delta}
+
\frac{2L_{\mathrm{par}}R^2}{K}
\right].
}
\end{equation}
Every term except $\A_m^\eps$ is independent of $\eps$.
\end{theorem}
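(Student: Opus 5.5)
The plan is the standard excess-risk decomposition, working on the event of \cref{lem:uniform-quadrature}, which has probability at least $1-\delta$. By \cref{lem:energy-certificate} (dropping the nonnegative $\mu$-term),
\[
\norm{v_{\widehat\beta_K}-u^\eps}_{\V}^2
\le
\frac2\alpha\bigl[\J_\eps(v_{\widehat\beta_K})-\J_\eps(u^\eps)\bigr].
\]
So it suffices to bound the population energy excess of the iterate by $\A_m^\eps+2\Delta_{N,\delta}+2L_{\mathrm{par}}R^2/K$.

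Fix any comparison coefficient $\beta\in\B_R^m$, and let $\beta_N^\ast$ denote a minimizer of $\widehat j_{\eps,N}$ over $\B_R^m$. This minimizer exists because $\widehat j_{\eps,N}$ is continuous on the compact ball. I will telescope
\[
\J_\eps(v_{\widehat\beta_K})-\J_\eps(u^\eps)
=
\bigl[\J_\eps(v_{\widehat\beta_K})-\widehat j_{\eps,N}(\widehat\beta_K)\bigr]
+\bigl[\widehat j_{\eps,N}(\widehat\beta_K)-\widehat j_{\eps,N}(\beta_N^\ast)\bigr]
+\bigl[\widehat j_{\eps,N}(\beta_N^\ast)-\widehat j_{\eps,N}(\beta)\bigr]
+\bigl[\widehat j_{\eps,N}(\beta)-\J_\eps(v_\beta)\bigr]
+\bigl[\J_\eps(v_\beta)-\J_\eps(u^\eps)\bigr].
\]
The terms are bounded as follows.
\begin{itemize}
\item The first and fourth brackets are each at most $\Delta_{N,\delta}$ by the uniform bound \eqref{eq:delta-bound}. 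This is where uniformity over the ball matters: $\widehat\beta_K$ is data-dependent, and \eqref{eq:delta-bound} holds simultaneously for all $\beta$.
\item The second bracket is at most $2L_{\mathrm{par}}R^2/K$ by \eqref{eq:pgd-gap}.
\item The third bracket is nonpositive by the definition of $\beta_N^\ast$.
\end{itemize}
Taking the infimum over $\beta\in\B_R^m$ in the last bracket then yields $\A_m^\eps$.

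Step \eqref{eq:pgd-gap} is stated in the text just before the theorem, so I take it as given. If needed, I would justify it as follows. Convexity of $\widehat j_{\eps,N}$ follows from convexity of $F$ in $\xi$ and of $G$ in $s$ composed with linear maps of $\beta$. The Hessian bound \eqref{eq:parameter-smoothness} follows from $\grad^2\widehat j = \frac{\abs{\Om}}{N}\sum_i\bigl[Z_m^\top\partial^2_{\xi\xi}F\,Z_m + \partial^2_{ss}G\,z_mz_m^\top\bigr]$ evaluated within the ranges $\abs{\xi}\le RB_1$ and $\abs{s}\le RB_0$, where $L_F$ and $L_G$ apply. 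The standard $O(L\,\mathrm{diam}^2/K)$ rate for projected gradient on the convex, $L$-smooth problem over a ball of diameter $2R$ then gives \eqref{eq:pgd-gap}; the stated constant corresponds to the usual $L\norm{\beta_0-\beta^\ast}^2/(2K)\le 2LR^2/K$.

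Finally, I will check $\eps$-independence by inspection. $\Delta_{N,\delta}$ depends only on $L_h$, $M_h$, $R$, $m$, $N$, $\delta$, and $\abs{\Om}$. $L_{\mathrm{par}}$ involves only $L_F$, $L_G$, $B_0$, and $B_1$. These are all suprema over the fast variable $y$, and none involves a $y$-derivative. The only real subtlety is ensuring that both sampling applications occur on the same single event of \cref{lem:uniform-quadrature}, so that no further union bound is needed and the probability $1-\delta$ is preserved.
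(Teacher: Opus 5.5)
Your proposal is correct and follows essentially the same route as the paper: on the single event of \cref{lem:uniform-quadrature} you insert the empirical energy twice, bound the optimization term by \eqref{eq:pgd-gap}, and finish with \cref{lem:energy-certificate}. The only difference is cosmetic: you compare with an arbitrary $\beta$ and an exact empirical minimizer before taking the infimum, whereas the paper uses an $\eta$-near-minimizer $\beta_\eta$ of the population excess and lets $\eta\downarrow0$.
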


\begin{proof}
For $\eta>0$, choose $\beta_\eta\in\B_R^m$ such that
\[
\J_\eps(v_{\beta_\eta})-\J_\eps(u^\eps)
\leq
\A_m^\eps+\eta.
\]
On the event in \cref{lem:uniform-quadrature}, insert the empirical
energy twice:
\begin{align*}
\J_\eps(v_{\widehat\beta_K})-\J_\eps(u^\eps)
\leq{}&
\Jh_{\eps,N}(v_{\widehat\beta_K})
-
\Jh_{\eps,N}(v_{\beta_\eta})
\\
&+
2\Delta_{N,\delta}
+
\A_m^\eps+\eta.
\end{align*}
Use \eqref{eq:pgd-gap}, let $\eta\downarrow0$, and apply
\cref{lem:energy-certificate}.
\end{proof}

\section{Multidimensional approximation closure}
\label{sec:state-closure}

We now close $\A_m^\eps$ for periodic nonlinear fluxes in arbitrary
dimension. In this section,
\[
F=F(y,\xi),
\qquad
G=G(s),
\qquad
A(y,\xi):=\partial_\xi F(y,\xi),
\qquad
g(s):=\partial_sG(s).
\]
The equation is
\begin{equation}
\label{eq:md-pde}
-\diver A\!\left(\frac{x}{\eps},\grad u^\eps\right)
+
g(u^\eps)
=f
\quad\text{in }\Om,
\qquad
u^\eps=0
\quad\text{on }\partial\Om.
\end{equation}

\begin{assumption}[Periodic corrector regime]
\label{ass:corrector-regime}
The domain $\Om$ is $C^{1,1}$. The flux is $Y$-periodic in $y$,
$A(y,0)=0$, and there exist $0<\alpha\leq\Lambda$ such that
\begin{equation}
\label{eq:md-structure}
\bigl(
A(y,\xi)-A(y,\zeta)
\bigr)\cdot(\xi-\zeta)
\geq
\alpha\abs{\xi-\zeta}^2,
\qquad
\abs{
A(y,\xi)-A(y,\zeta)
}
\leq
\Lambda\abs{\xi-\zeta}.
\end{equation}
The reaction $g$ is monotone and Lipschitz on the common range of the
solutions and trial functions.

For each $\xi\in\R^d$, let
$N(\cdot,\xi)\in H_{\mathrm{per}}^1(Y)$ be the mean-zero solution of
\begin{equation}
\label{eq:md-cell-problem}
-\diver_y
A\bigl(
y,\xi+\grad_yN(y,\xi)
\bigr)
=0
\quad\text{in }Y,
\qquad
\int_YN(y,\xi)\dd y=0,
\end{equation}
and define
\begin{equation}
\label{eq:md-homogenized-flux}
\widehat A(\xi)
:=
\int_Y
A\bigl(
y,\xi+\grad_yN(y,\xi)
\bigr)\dd y.
\end{equation}
Let $u^0\in H_0^1(\Om)$ solve
\begin{equation}
\label{eq:md-homogenized-pde}
-\diver\widehat A(\grad u^0)+g(u^0)=f
\quad\text{in }\Om,
\qquad
u^0=0
\quad\text{on }\partial\Om.
\end{equation}

There are cutoff functions $\eta_\eps\in W^{1,\infty}(\Om)$ satisfying
\begin{equation}
\label{eq:md-cutoff}
0\leq\eta_\eps\leq1,
\qquad
\eta_\eps=0
\ \text{if }\operatorname{dist}(x,\partial\Om)\leq\eps,
\qquad
\eta_\eps=1
\ \text{if }\operatorname{dist}(x,\partial\Om)\geq2\eps,
\qquad
\norm{\grad\eta_\eps}_{L^\infty}
\leq
C_\eta\eps^{-1}.
\end{equation}
Set
\begin{equation}
\label{eq:state-corrector}
\chi^0(x,y)
:=
N\bigl(y,\grad u^0(x)\bigr),
\qquad
w^\eps(x)
:=
u^0(x)
+
\eps\eta_\eps(x)
\chi^0\!\left(x,\frac{x}{\eps}\right).
\end{equation}
Assume
\begin{equation}
\label{eq:corrector-regularity}
u^0\in W^{2,\infty}(\Om),
\qquad
\chi^0\in W^{1,\infty}(\Om\times Y),
\end{equation}
and the quantitative corrector estimate
\begin{equation}
\label{eq:md-quantitative-corrector}
\norm{u^\eps-w^\eps}_{H^1(\Om)}
\leq
C_{\mathrm{hom}}\sqrt\eps
\end{equation}
holds with $C_{\mathrm{hom}}$ independent of $\eps$.
\end{assumption}

\begin{remark}
Existence of the nonlinear corrector and qualitative convergence follow
from the periodic homogenization theory for monotone operators
\citep{DalMasoDefranceschi1990,Bystrom2001}. Quantitative
$O(\eps^{1/2})$ estimates on $C^{1,1}$ domains are available for
quasilinear periodic operators under corresponding structural hypotheses
\citep{WangXuZhao2018}. We state \eqref{eq:md-quantitative-corrector}
explicitly because it is the exact homogenization input used below.
\end{remark}

The estimate \eqref{eq:md-quantitative-corrector} is a stated quantitative homogenization hypothesis for the reaction--diffusion problem considered here. The cited quasilinear homogenization results supply the corresponding diffusion corrector estimates; the present learning theorem uses only \eqref{eq:md-quantitative-corrector} and does not require a broader homogenization claim.

Let
\[
\mathcal U_{m_0}
:=
\operatorname{span}
\{z_1,\ldots,z_{m_0}\}
\subset
H_0^1(\Om)\cap W^{1,\infty}(\Om)
\]
and
\[
\mathcal C_{m_1}
:=
\operatorname{span}
\{\Phi_1,\ldots,\Phi_{m_1}\}
\subset
W^{1,\infty}(\Om\times Y),
\]
where every $\Phi_\ell$ is $Y$-periodic. Define
\begin{equation}
\label{eq:md-approximation-errors}
\Phi_{0,m_0}
:=
\inf_{U\in\mathcal U_{m_0}}
\norm{U-u^0}_{H^1(\Om)},
\qquad
\Phi_{1,m_1}
:=
\inf_{C\in\mathcal C_{m_1}}
\norm{C-\chi^0}_{W^{1,\infty}(\Om\times Y)}.
\end{equation}
For $m=m_0+m_1$, set
\begin{equation}
\label{eq:md-trial-class}
\mathcal V_m^\eps
:=
\left\{
U(x)
+
\eps\eta_\eps(x)
C\!\left(x,\frac{x}{\eps}\right)
:
U\in\mathcal U_{m_0},
\ C\in\mathcal C_{m_1}
\right\}.
\end{equation}
The class is linear in its trainable coefficients. Moreover,
\begin{equation}
\label{eq:md-feature-gradient}
\grad\!\left[
\eps\eta_\eps(x)
\Phi_\ell\!\left(x,\frac{x}{\eps}\right)
\right]
=
\eps\grad\eta_\eps\,\Phi_\ell
+
\eps\eta_\eps\grad_x\Phi_\ell
+
\eta_\eps\grad_y\Phi_\ell,
\end{equation}
so its feature envelopes are independent of $\eps$.

\begin{theorem}[Multidimensional state approximation closure]
\label{thm:md-approximation-closure}
Under \cref{ass:structure,ass:corrector-regime}, assume that the
coefficient ball contains approximants realizing
\eqref{eq:md-approximation-errors}. Then there exists
$C_{\mathrm{ms}}>0$, independent of $\eps$, such that
\begin{equation}
\label{eq:md-approximation-bound}
\A_m^\eps
\leq
C_{\mathrm{ms}}
\left(
\eps
+
\Phi_{0,m_0}^2
+
\Phi_{1,m_1}^2
\right).
\end{equation}
Consequently, with probability at least $1-\delta$,
\begin{equation}
\label{eq:md-end-to-end}
\boxed{
\norm{v_{\widehat\beta_K}-u^\eps}_{\V}^2
\leq
\frac2\alpha
\left[
C_{\mathrm{ms}}
\left(
\eps+\Phi_{0,m_0}^2+\Phi_{1,m_1}^2
\right)
+
2\Delta_{N,\delta}
+
\frac{2L_{\mathrm{par}}R^2}{K}
\right].
}
\end{equation}
\end{theorem}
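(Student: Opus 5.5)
We bound $\A_m^\eps$ by evaluating the energy gap at one explicit competitor in $\mathcal V_m^\eps$; the end-to-end bound \eqref{eq:md-end-to-end} then follows at once by inserting \eqref{eq:md-approximation-bound} into \cref{thm:end-to-end}. Let $U_\star\in\mathcal U_{m_0}$ and $C_\star\in\mathcal C_{m_1}$ realize the infima in \eqref{eq:md-approximation-errors}; by hypothesis both lie in the coefficient ball. Set
\[
v_\star:=U_\star+\eps\eta_\eps C_\star\!\left(\cdot,\tfrac{\cdot}{\eps}\right)\in\mathcal V_m^\eps .
\]
The first step is an upper bound for the energy gap that reverses \cref{lem:energy-certificate}. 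Because $A(y,\cdot)$ is $\Lambda$-Lipschitz, we have $F(y,\xi)\le F(y,\eta)+A(y,\eta)\cdot(\xi-\eta)+\tfrac\Lambda2|\xi-\eta|^2$. Likewise $G(s)\le G(t)+g(t)(s-t)+\tfrac{L_g}{2}|s-t|^2$ on the common range assumed in \cref{ass:corrector-regime}. Take $\eta=\grad u^\eps$ and $t=u^\eps$, integrate, and use the Euler equation \eqref{eq:md-pde}; the first-order terms cancel exactly as in the proof of \cref{lem:energy-certificate}. Poincaré's inequality then gives
\[
\J_\eps(v_\star)-\J_\eps(u^\eps)\le C\norm{v_\star-u^\eps}_{H^1(\Om)}^2 ,
\]
where $C$ depends only on $\Lambda$, $L_g$ and $\Om$. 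This constant is independent of $\eps$.

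Next we split
\[
v_\star-u^\eps=(w^\eps-u^\eps)+(U_\star-u^0)+\eps\eta_\eps\bigl(C_\star-\chi^0\bigr)\!\left(\cdot,\tfrac{\cdot}{\eps}\right).
\]
The first term contributes at most $C_{\mathrm{hom}}^2\eps$ after squaring, by \eqref{eq:md-quantitative-corrector}. The second contributes $\Phi_{0,m_0}^2$. For the third, write $D:=C_\star-\chi^0$ and apply the chain rule \eqref{eq:md-feature-gradient} to $D$. This gives
\[
\abs{\grad[\eps\eta_\eps D(x,x/\eps)]}\le \eps\abs{\grad\eta_\eps}\,\norm{D}_{L^\infty}+\eps\norm{\grad_xD}_{L^\infty}+\norm{\grad_yD}_{L^\infty}\le (C_\eta+1+1)\norm{D}_{W^{1,\infty}(\Om\times Y)} .
\]
The $L^2$ norm of $\eps\eta_\eps D$ is similarly bounded by $\eps\norm{D}_{L^\infty}$. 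Hence the third term has $H^1$ norm at most $C|\Om|^{1/2}\Phi_{1,m_1}$. Combining the three pieces with $(a+b+c)^2\le3(a^2+b^2+c^2)$ yields \eqref{eq:md-approximation-bound}.

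The main obstacle is making sure that $D(x,x/\eps)$ is well defined and that the chain rule \eqref{eq:md-feature-gradient} holds pointwise. Traces of $W^{1,\infty}(\Om\times Y)$ functions on the graph $\{y=x/\eps\}$ need care. I would handle this by using that $W^{1,\infty}$ functions are Lipschitz on $\Om\times\R^d$ after periodic extension, since $\Om$ is $C^{1,1}$ and hence locally quasiconvex. For Lipschitz functions, composition with the Lipschitz map $x\mapsto(x,x/\eps)$ is legitimate and the bound above holds a.e.

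A second point must be checked, though it is routine. The gradient bound on the factor $\eta_\eps$ costs $\eps^{-1}$, but this is exactly compensated by the prefactor $\eps$. The boundary layer therefore costs nothing beyond what \eqref{eq:md-quantitative-corrector} already absorbs. Finally, $v_\star$ vanishes on $\partial\Om$ because $U_\star\in H_0^1$ and $\eta_\eps=0$ near $\partial\Om$. Thus $v_\star\in\V$, and the infimum defining $\A_m^\eps$ is indeed bounded by this competitor.
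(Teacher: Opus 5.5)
Your proposal is correct and follows the paper's proof essentially step for step. Both arguments take one near-best competitor $U+\eps\eta_\eps C(\cdot,\cdot/\eps)$ and bound its lift $\eps$-uniformly via the chain rule \eqref{eq:md-feature-gradient}, using that the prefactor $\eps$ absorbs $\abs{\grad\eta_\eps}\le C_\eta\eps^{-1}$. They then pass through $w^\eps$ by the triangle inequality and \eqref{eq:md-quantitative-corrector}, apply a quadratic upper bound on the energy gap at the minimizer, use $(a+b+c)^2\le3(a^2+b^2+c^2)$, and substitute into \cref{thm:end-to-end}. Your choice of the global Lipschitz constant $\Lambda$ of $A$, instead of the paper's trial-range envelope $L_F$, is if anything slightly cleaner, because the segment from $\grad u^\eps$ to $\grad v_\star$ need not stay in the range $\abs{\xi}\le RB_1$.
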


\begin{proof}
Choose $U_m\in\mathcal U_{m_0}$ and
$C_m\in\mathcal C_{m_1}$ arbitrarily close to the infima in
\eqref{eq:md-approximation-errors}, and define
\[
v_m^\eps(x)
:=
U_m(x)
+
\eps\eta_\eps(x)
C_m\!\left(x,\frac{x}{\eps}\right).
\]
Let $D_m:=C_m-\chi^0$. From \eqref{eq:md-cutoff} and
\eqref{eq:md-feature-gradient},
\[
\norm{
\eps\eta_\eps
D_m(\cdot,\cdot/\eps)
}_{H^1(\Om)}
\leq
C_{\mathrm{lift}}
\norm{D_m}_{W^{1,\infty}(\Om\times Y)},
\]
where $C_{\mathrm{lift}}$ is independent of $\eps$. Therefore
\[
\norm{v_m^\eps-w^\eps}_{H^1(\Om)}
\leq
\Phi_{0,m_0}
+
C_{\mathrm{lift}}\Phi_{1,m_1}.
\]
Together with \eqref{eq:md-quantitative-corrector},
\[
\norm{v_m^\eps-u^\eps}_{H^1(\Om)}
\leq
C_{\mathrm{hom}}\sqrt\eps
+
\Phi_{0,m_0}
+
C_{\mathrm{lift}}\Phi_{1,m_1}.
\]
Taylor's formula around the minimizer $u^\eps$, using the upper
curvature bounds on the trial range, gives
\[
\J_\eps(v_m^\eps)-\J_\eps(u^\eps)
\leq
\frac{L_F+L_G}{2}
\norm{v_m^\eps-u^\eps}_{H^1(\Om)}^2.
\]
The inequality $(a+b+c)^2\leq3(a^2+b^2+c^2)$ proves
\eqref{eq:md-approximation-bound}. The feature envelopes are
$\eps$-uniform by \eqref{eq:md-feature-gradient}, so
\cref{thm:end-to-end} yields \eqref{eq:md-end-to-end}.
\end{proof}

\section{Scale-robust primal--dual learning}
\label{sec:primal-dual}

The primal energy controls the state but does not directly certify a learned
flux. We now derive a first-order primal--dual formulation that remains
convex, provides a computable state-error certificate, and admits
$\eps$-uniform finite-sample and finite-iteration bounds.

\subsection{Convex duality and the gap certificate}

For almost every $(x,y)$, define
\begin{align}
F^*(x,y,p)
&:=
\sup_{\xi\in\R^d}
\left\{
p\cdot\xi-F(x,y,\xi)
\right\},
\label{eq:pd-F-star}\\
G^*(x,r)
&:=
\sup_{s\in\R}
\left\{
rs-G(x,s)
\right\}.
\label{eq:pd-G-star}
\end{align}

\begin{assumption}[Primal--dual curvature]
\label{ass:pd-regularity}
In addition to \cref{ass:structure}, assume $\mu>0$ and
\begin{equation}
\label{eq:pd-curvature}
\alpha I_d
\preceq
\partial_{\xi\xi}^2F(x,y,\xi)
\preceq
\overline L_FI_d,
\qquad
\mu
\leq
\partial_{ss}^2G(x,s)
\leq
\overline L_G
\end{equation}
for finite $\overline L_F,\overline L_G$ independent of $\eps$. Assume
also
\begin{equation}
\label{eq:pd-baseline-regularity}
\esssup_{x,y}
\abs{\partial_\xi F(x,y,0)}
<\infty,
\qquad
\partial_sG(\cdot,0)\in L^2(\Om).
\end{equation}
\end{assumption}

Then
\begin{equation}
\label{eq:pd-conjugate-curvature}
\frac1{\overline L_F}I_d
\preceq
\partial_{pp}^2F^*(x,y,p)
\preceq
\frac1\alpha I_d,
\qquad
\frac1{\overline L_G}
\leq
\partial_{rr}^2G^*(x,r)
\leq
\frac1\mu.
\end{equation}

Let
\[
H(\diver;\Om)
:=
\left\{
p\in L^2(\Om;\R^d):
\diver p\in L^2(\Om)
\right\}.
\]
For $p\in H(\diver;\Om)$, set
\begin{equation}
\label{eq:pd-rp}
r_p:=f+\diver p.
\end{equation}

\begin{definition}[Primal--dual gap]
\label{def:pd-gap}
For $v\in\V$ and $p\in H(\diver;\Om)$, define
\begin{equation}
\label{eq:pd-gap}
\mathfrak G_\eps(v,p)
:=
\J_\eps(v)
+
\int_\Om
\left[
F^*\!\left(x,\frac{x}{\eps},p\right)
+
G^*(x,r_p)
\right]\dd x.
\end{equation}
\end{definition}

\begin{theorem}[Scale-independent primal--dual certificate]
\label{thm:pd-certificate}
Let $u^\eps$ solve \eqref{eq:pde-general}. Then, for every
$v\in\V$ and $p\in H(\diver;\Om)$,
\begin{equation}
\label{eq:pd-reliability}
\mathfrak G_\eps(v,p)
\geq
\J_\eps(v)-\J_\eps(u^\eps)
\geq
\frac\alpha2\norm{v-u^\eps}_{\V}^2
+
\frac\mu2\norm{v-u^\eps}_{L^2(\Om)}^2.
\end{equation}
With
\begin{equation}
\label{eq:pd-exact-fields}
p^\eps
:=
\partial_\xi F\!\left(
x,\frac{x}{\eps},\grad u^\eps
\right),
\qquad
r^\eps
:=
\partial_sG(x,u^\eps),
\end{equation}
one has
\begin{equation}
\label{eq:pd-zero-gap}
p^\eps\in H(\diver;\Om),
\qquad
r^\eps=f+\diver p^\eps,
\qquad
\mathfrak G_\eps(u^\eps,p^\eps)=0.
\end{equation}
\end{theorem}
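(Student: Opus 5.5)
The proof will combine weak duality, obtained from the Fenchel--Young inequality, with the energy certificate of \cref{lem:energy-certificate}. The second inequality in \eqref{eq:pd-reliability} is exactly \eqref{eq:energy-certificate}, so it suffices to prove $\mathfrak G_\eps(v,p)\ge \J_\eps(v)-\J_\eps(u^\eps)$. By \eqref{eq:pd-gap}, this is equivalent to
\[
-\int_\Om\left[F^*\!\left(x,\tfrac{x}{\eps},p\right)+G^*(x,r_p)\right]\dd x\le \J_\eps(u^\eps)\qquad\text{for every }p\in H(\diver;\Om),
\]
which is weak duality.

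To prove it, I will apply Fenchel--Young pointwise with $w=u^\eps$, or more generally with any $w\in\V$:
\[
F\!\left(x,\tfrac{x}{\eps},\grad w\right)\ge p\cdot\grad w-F^*\!\left(x,\tfrac{x}{\eps},p\right),
\qquad
G(x,w)\ge r_p w-G^*(x,r_p).
\]
Integrating and subtracting $\int_\Om f w$ gives
\[
\J_\eps(w)\ge\int_\Om\bigl[p\cdot\grad w+(f+\diver p)w-fw\bigr]\dd x-\int_\Om\bigl[F^*+G^*\bigr]\dd x .
\]
The Green formula for $p\in H(\diver;\Om)$ and $w\in H_0^1(\Om)$ gives $\int_\Om p\cdot\grad w=-\int_\Om w\,\diver p$, with no boundary term because $w$ vanishes on $\partial\Om$. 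The bracket therefore integrates to zero, which is weak duality. Taking $w=u^\eps$ yields the first inequality.

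The main obstacle is measure-theoretic bookkeeping: the integrals of $F^*$ and $G^*$ must make sense. By \eqref{eq:pd-conjugate-curvature}, $F^*$ is finite with quadratic bounds, so $\int F^*(x,x/\eps,p)\,\dd x$ is finite for $p\in L^2$, using \eqref{eq:pd-baseline-regularity} to control the linear and constant parts. The same holds for $G^*(x,r_p)$ with $r_p\in L^2$. If the curvature assumption were not in force, one would only interpret the integral in $(-\infty,\infty]$, and the inequality would hold trivially when it is $+\infty$.

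For \eqref{eq:pd-zero-gap}, I will argue as follows. The Lipschitz bound on $\partial_\xi F$ from $\overline L_F$, together with $\partial_\xi F(\cdot,\cdot,0)\in L^\infty$, gives $p^\eps\in L^2$. The weak Euler equation of the minimizer states $\int p^\eps\cdot\grad\varphi+\int(\partial_sG(x,u^\eps)-f)\varphi=0$ for all $\varphi\in H_0^1$. This says $\diver p^\eps=r^\eps-f$ in distributions. Since $r^\eps\in L^2$ (by the $\overline L_G$-Lipschitz bound and $\partial_sG(\cdot,0)\in L^2$) and $f\in L^\infty$, we get $p^\eps\in H(\diver;\Om)$ and $r_{p^\eps}=r^\eps$. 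Finally, equality holds in Fenchel--Young exactly when the dual variable is the gradient of the primal function: $p^\eps=\partial_\xi F(\cdot,\grad u^\eps)$ and $r^\eps=\partial_sG(\cdot,u^\eps)$. The computation above with $w=u^\eps$ then holds with equality, so $\J_\eps(u^\eps)=-\int[F^*+G^*]$, that is, $\mathfrak G_\eps(u^\eps,p^\eps)=0$.
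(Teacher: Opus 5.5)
Your proposal is correct and follows essentially the paper's proof: Fenchel--Young plus Green's formula gives weak duality $-\int_\Om[F^*+G^*]\dd x\le\J_\eps(u^\eps)$, \cref{lem:energy-certificate} supplies the second inequality, and Fenchel equality at the exact pairs $(\grad u^\eps,p^\eps)$, $(u^\eps,r^\eps)$ gives zero gap. Your derivation of $p^\eps\in H(\diver;\Om)$ from the weak Euler equation fills in a step the paper only asserts. One small slip: bounding the constant parts $F^*(x,y,0)=-\min_\xi F(x,y,\xi)$ and $G^*(x,0)$ also requires bounds on $F(\cdot,\cdot,0)$ and $G(\cdot,0)$ (e.g.\ \eqref{eq:zero-envelope}), not just \eqref{eq:pd-baseline-regularity}; your $(-\infty,\infty]$ interpretation makes this harmless.
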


\begin{proof}
Since $v$ has zero trace and $f=r_p-\diver p$,
\[
\int_\Om fv\dd x
=
\int_\Om
\left(
p\cdot\grad v+r_pv
\right)\dd x.
\]
Hence
\begin{align}
\mathfrak G_\eps(v,p)
={}&
\int_\Om
\left[
F\!\left(x,\frac{x}{\eps},\grad v\right)
+
F^*\!\left(x,\frac{x}{\eps},p\right)
-
p\cdot\grad v
\right]\dd x
\notag\\
&+
\int_\Om
\left[
G(x,v)+G^*(x,r_p)-r_pv
\right]\dd x.
\label{eq:pd-fenchel-decomposition}
\end{align}
Both integrands are nonnegative by Fenchel--Young. Define
\[
\mathcal D_\eps(p)
:=
-
\int_\Om
\left[
F^*\!\left(x,\frac{x}{\eps},p\right)
+
G^*(x,r_p)
\right]\dd x.
\]
Equation \eqref{eq:pd-fenchel-decomposition} implies
$\mathcal D_\eps(p)\leq\J_\eps(v)$ for every $v$, hence
$\mathcal D_\eps(p)\leq\J_\eps(u^\eps)$. Thus
\[
\mathfrak G_\eps(v,p)
=
\J_\eps(v)-\mathcal D_\eps(p)
\geq
\J_\eps(v)-\J_\eps(u^\eps),
\]
and \cref{lem:energy-certificate} gives
\eqref{eq:pd-reliability}.

The PDE yields $-\diver p^\eps+r^\eps=f$. Fenchel equality holds at the
exact conjugate pairs:
\[
F(\grad u^\eps)+F^*(p^\eps)
=
p^\eps\cdot\grad u^\eps,
\qquad
G(u^\eps)+G^*(r^\eps)
=
r^\eps u^\eps.
\]
Substitution into \eqref{eq:pd-fenchel-decomposition} gives zero gap.
\end{proof}

\subsection{Finite state and flux classes}

Retain
\[
v_\beta=\beta^\top z_m,
\qquad
\beta\in\B_R^m.
\]
Let
$w_1,\ldots,w_n\in H(\diver;\Om)\cap L^\infty(\Om;\R^d)$ with
$\diver w_j\in L^\infty(\Om)$, and set
\[
W_n(x)
:=
\begin{bmatrix}
w_1(x)&\cdots&w_n(x)
\end{bmatrix},
\qquad
D_n(x)
:=
\begin{bmatrix}
\diver w_1(x)\\
\vdots\\
\diver w_n(x)
\end{bmatrix}.
\]
For $\gamma\in\B_S^n$, define
\begin{equation}
\label{eq:pd-dual-class}
p_\gamma(x):=W_n(x)\gamma,
\qquad
r_\gamma(x)
=
f(x)+D_n(x)^\top\gamma.
\end{equation}
Set
\begin{equation}
\label{eq:pd-dual-envelopes}
B_2
:=
\esssup_x\norm{W_n(x)}_{\mathrm{op}},
\qquad
B_3
:=
\esssup_x\norm{D_n(x)}_2.
\end{equation}

On
\[
\abs p\leq SB_2,
\qquad
\abs r\leq\norm{f}_{L^\infty(\Om)}+SB_3,
\]
define finite derivative envelopes $M_{F^*}$ and $M_{G^*}$, and set
\begin{align}
L_h^{\mathrm{pd}}
&:=
M_FB_1
+
(M_G+\norm{f}_{L^\infty(\Om)})B_0
+
M_{F^*}B_2
+
M_{G^*}B_3,
\label{eq:pd-Lh}\\
R_{\mathrm{pd}}
&:=
\sqrt{R^2+S^2}.
\end{align}
Assume also that
\begin{equation}
\label{eq:pd-zero-envelope}
C_0^{\mathrm{pd}}
:=
\esssup_{x,y}
\left|
F(x,y,0)+G(x,0)+F^*(x,y,0)+G^*(x,f(x))
\right|
<\infty,
\end{equation}
and define
\[
M_h^{\mathrm{pd}}
:=
C_0^{\mathrm{pd}}
+
R_{\mathrm{pd}}L_h^{\mathrm{pd}}.
\]

For $(\beta,\gamma)\in\B_R^m\times\B_S^n$, define
\begin{align}
h_{\beta,\gamma}^{\eps,\mathrm{pd}}(x)
:={}&
F\!\left(
x,\frac{x}{\eps},Z_m(x)\beta
\right)
+
G\bigl(x,z_m(x)^\top\beta\bigr)
-
f(x)z_m(x)^\top\beta
\notag\\
&+
F^*\!\left(
x,\frac{x}{\eps},W_n(x)\gamma
\right)
+
G^*\bigl(
x,f(x)+D_n(x)^\top\gamma
\bigr).
\label{eq:pd-integrand}
\end{align}
Then
\[
\abs{
h_{\beta,\gamma}^{\eps,\mathrm{pd}}
-
h_{\widetilde\beta,\widetilde\gamma}^{\eps,\mathrm{pd}}
}
\leq
L_h^{\mathrm{pd}}
\left\|
\begin{pmatrix}
\beta-\widetilde\beta\\
\gamma-\widetilde\gamma
\end{pmatrix}
\right\|_2,
\qquad
\abs{h_{\beta,\gamma}^{\eps,\mathrm{pd}}}
\leq
M_h^{\mathrm{pd}}.
\]

Define
\begin{equation}
\label{eq:pd-empirical-gap}
\widehat{\mathfrak G}_{\eps,N}(\beta,\gamma)
:=
\frac{\abs{\Om}}{N}
\sum_{i=1}^N
h_{\beta,\gamma}^{\eps,\mathrm{pd}}(X_i).
\end{equation}

\begin{lemma}[Uniform primal--dual empirical deviation]
\label{lem:pd-uniform-deviation}
With probability at least $1-\delta$,
\begin{equation}
\label{eq:pd-uniform-deviation}
\sup_{\beta\in\B_R^m,\gamma\in\B_S^n}
\left|
\mathfrak G_\eps(v_\beta,p_\gamma)
-
\widehat{\mathfrak G}_{\eps,N}(\beta,\gamma)
\right|
\leq
\Delta_{N,\delta}^{\mathrm{pd}},
\end{equation}
where
\begin{equation}
\label{eq:pd-Delta}
\Delta_{N,\delta}^{\mathrm{pd}}
:=
\abs{\Om}
\left[
\frac{
2L_h^{\mathrm{pd}}R_{\mathrm{pd}}
}{N}
+
M_h^{\mathrm{pd}}
\sqrt{
\frac{
2\left[
(m+n)\log(1+2N)+\log(2/\delta)
\right]
}{N}
}
\right].
\end{equation}
\end{lemma}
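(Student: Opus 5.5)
The proof repeats the argument of \cref{lem:uniform-quadrature} with the primal coefficient ball $\B_R^m$ replaced by the product set $\B_R^m\times\B_S^n$. This set sits inside the Euclidean ball of radius $R_{\mathrm{pd}}=\sqrt{R^2+S^2}$ in $\R^{m+n}$.

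The first step is to identify the population gap with the expectation of the integrand. Because $r_{p_\gamma}=f+\diver p_\gamma=f+D_n^\top\gamma$, we have
\[
\mathfrak G_\eps(v_\beta,p_\gamma)=\int_\Om h^{\eps,\mathrm{pd}}_{\beta,\gamma}\dd x=\abs{\Om}\,\E\bigl[h^{\eps,\mathrm{pd}}_{\beta,\gamma}(X)\bigr],
\]
with $X$ uniform on $\Om$. Hence
\[
\mathfrak G_\eps-\widehat{\mathfrak G}_{\eps,N}=\abs{\Om}(\E-P_N)h^{\eps,\mathrm{pd}}_{\beta,\gamma},
\]
where $P_N$ is the empirical measure of $X_1,\ldots,X_N$. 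The envelope and Lipschitz bounds stated just before \eqref{eq:pd-empirical-gap} hold uniformly in $x$ and $\eps$. We take them as given. They follow from the mean value theorem applied to each of the five terms: the relevant arguments stay in the ranges $\abs{\xi}\le RB_1$, $\abs{s}\le RB_0$, $\abs p\le SB_2$ and $\abs r\le\norm f_{L^\infty(\Om)}+SB_3$. Summing the per-block Lipschitz constants gives a bound against $\norm{\beta-\widetilde\beta}_2+\norm{\gamma-\widetilde\gamma}_2$, and hence against $L_h^{\mathrm{pd}}$ times the joint norm, since the sum is at most $\sqrt2$ times the joint norm. If the constant must be exactly $L_h^{\mathrm{pd}}$, one instead bounds the Lipschitz constant through the gradient, which has norm at most $L_h^{\mathrm{pd}}$.

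The second step is the net. Let $\mathcal N$ be an $(R_{\mathrm{pd}}/N)$-net of the ball of radius $R_{\mathrm{pd}}$ in $\R^{m+n}$. We restrict to net points whose nearby region meets $\B_R^m\times\B_S^n$ and project them into the product set. The projection onto a convex set is nonexpansive, so the projected points form a $(2R_{\mathrm{pd}}/N)$-net of the product set. Alternatively, we take the product of nets of the two balls with radii $R/N$ and $S/N$; its cardinality is at most $(1+2N)^{m}(1+2N)^n=(1+2N)^{m+n}$ and its mesh is at most $R_{\mathrm{pd}}/N$ in the joint norm. The product construction is the cleaner choice, and it gives exactly the factor $(m+n)\log(1+2N)$ appearing in \eqref{eq:pd-Delta}.

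The third step applies Hoeffding's inequality at each net point with range bound $M_h^{\mathrm{pd}}$. This gives
\[
\Pp\bigl(\abs{(\E-P_N)h}>t\bigr)\le 2\exp\bigl(-Nt^2/(2(M_h^{\mathrm{pd}})^2)\bigr).
\]
A union bound over $(1+2N)^{m+n}$ points and the choice $t=M_h^{\mathrm{pd}}\sqrt{2[(m+n)\log(1+2N)+\log(2/\delta)]/N}$ control all net points simultaneously with probability at least $1-\delta$. For a general $(\beta,\gamma)$ we pass to the nearest net point. Each of $\E$ and $P_N$ then incurs an error of at most $L_h^{\mathrm{pd}}R_{\mathrm{pd}}/N$, for a total of $2L_h^{\mathrm{pd}}R_{\mathrm{pd}}/N$. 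Multiplying by $\abs{\Om}$ gives \eqref{eq:pd-Delta}. No constant involves a derivative in the fast variable, so the bound is $\eps$-uniform.

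The only delicate point is the net bookkeeping in the second step. The product-of-nets choice makes it routine; everything else transfers verbatim from \cref{lem:uniform-quadrature}.
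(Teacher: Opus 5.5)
Your proof is correct and follows the paper's route. The paper's proof reruns the net, Hoeffding and union-bound argument of \cref{lem:uniform-quadrature} on $\B_R^m\times\B_S^n$, regarded inside the ball of radius $R_{\mathrm{pd}}$ in $\R^{m+n}$, exactly as you do. Your product-of-nets construction is a slightly more careful version of that step, because it keeps every net point inside $\B_R^m\times\B_S^n$, where the envelopes $L_h^{\mathrm{pd}}$ and $M_h^{\mathrm{pd}}$ are valid. Your projection variant in fact already gives an $(R_{\mathrm{pd}}/N)$-net rather than a $(2R_{\mathrm{pd}}/N)$-net, since $\Pi$ fixes points of the convex set and is nonexpansive.
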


\begin{proof}
Apply the proof of \cref{lem:uniform-quadrature} to the product parameter
set, viewed as a subset of the Euclidean ball of radius
$R_{\mathrm{pd}}$ in $\R^{m+n}$.
\end{proof}

The empirical gap is convex in $(\beta,\gamma)$. Its Hessian is block
diagonal and satisfies
\begin{equation}
\label{eq:pd-Lpar}
L_{\mathrm{par}}^{\mathrm{pd}}
:=
\abs{\Om}
\max\left\{
\overline L_FB_1^2+\overline L_GB_0^2,
\frac{B_2^2}{\alpha}+\frac{B_3^2}{\mu}
\right\}.
\end{equation}
Projected gradient descent on
$\Theta_{R,S}:=\B_R^m\times\B_S^n$ therefore gives
\begin{equation}
\label{eq:pd-pgd-rate}
\widehat{\mathfrak G}_{\eps,N}(\beta_K,\gamma_K)
-
\min_{\Theta_{R,S}}
\widehat{\mathfrak G}_{\eps,N}
\leq
\frac{
2L_{\mathrm{par}}^{\mathrm{pd}}(R^2+S^2)
}{K}.
\end{equation}

\subsection{Separated primal and flux approximation}

Define
\begin{equation}
\label{eq:pd-approximation-error}
\mathfrak A_{m,n}^{\eps,\mathrm{pd}}
:=
\inf_{\beta\in\B_R^m,\gamma\in\B_S^n}
\mathfrak G_\eps(v_\beta,p_\gamma).
\end{equation}
Set
\begin{align}
\mathfrak E_m^\eps
&:=
\inf_{\beta\in\B_R^m}
\left[
\frac{\overline L_F}{2}
\norm{\grad(v_\beta-u^\eps)}_{L^2}^2
+
\frac{\overline L_G}{2}
\norm{v_\beta-u^\eps}_{L^2}^2
\right],
\label{eq:pd-primal-approx}\\
\mathfrak Q_n^\eps
&:=
\inf_{\gamma\in\B_S^n}
\left[
\frac1{2\alpha}
\norm{p_\gamma-p^\eps}_{L^2}^2
+
\frac1{2\mu}
\norm{\diver(p_\gamma-p^\eps)}_{L^2}^2
\right].
\label{eq:pd-dual-approx}
\end{align}

\begin{proposition}[Separated approximation bound]
\label{prop:pd-separated-approximation}
Under \cref{ass:pd-regularity},
\begin{equation}
\label{eq:pd-separated-bound}
\mathfrak A_{m,n}^{\eps,\mathrm{pd}}
\leq
\mathfrak E_m^\eps+\mathfrak Q_n^\eps.
\end{equation}
\end{proposition}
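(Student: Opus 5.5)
The plan is to bound the gap at an arbitrary pair $(\beta,\gamma)\in\B_R^m\times\B_S^n$ by the sum of a primal term depending only on $\beta$ and a dual term depending only on $\gamma$. We then take the infimum over each variable separately. The starting point is the Fenchel decomposition \eqref{eq:pd-fenchel-decomposition} from the proof of \cref{thm:pd-certificate}. It writes $\mathfrak G_\eps(v,p)$ as the integral of the two nonnegative Fenchel--Young residuals
\[
F(\grad v)+F^*(p)-p\cdot\grad v,
\qquad
G(v)+G^*(r_p)-r_pv.
\]
In each residual we insert the exact pair $(\grad u^\eps,p^\eps)$, respectively $(u^\eps,r^\eps)$, for which Fenchel equality holds.

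\textbf{Key pointwise identity.} Set $\xi=\grad v$, $\xi^\eps=\grad u^\eps$ and $p^\eps=\partial_\xi F(\xi^\eps)$, so that $\xi^\eps=\partial_pF^*(p^\eps)$. Then
\[
F(\xi)+F^*(p)-p\cdot\xi
=\bigl[F(\xi)-F(\xi^\eps)-p^\eps\cdot(\xi-\xi^\eps)\bigr]
+\bigl[F^*(p)-F^*(p^\eps)-\xi^\eps\cdot(p-p^\eps)\bigr]
-(p-p^\eps)\cdot(\xi-\xi^\eps).
\]
This follows by expanding and using $F(\xi^\eps)+F^*(p^\eps)=p^\eps\cdot\xi^\eps$. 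The two Bregman brackets are bounded via \eqref{eq:pd-curvature} and \eqref{eq:pd-conjugate-curvature} by
\[
\tfrac{\overline L_F}{2}\abs{\xi-\xi^\eps}^2
\quad\text{and}\quad
\tfrac1{2\alpha}\abs{p-p^\eps}^2 .
\]
The same identity holds for the $G$ part with $(v,u^\eps,r_p,r^\eps)$, giving bounds $\tfrac{\overline L_G}{2}\abs{v-u^\eps}^2$ and $\tfrac1{2\mu}\abs{r_p-r^\eps}^2$. Here $r_p-r^\eps=\diver(p-p^\eps)$, by \eqref{eq:pd-rp} and \eqref{eq:pd-zero-gap}.

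\textbf{Cross terms.} After integration the two cross terms sum to
\[
-\int_\Om\bigl[(p-p^\eps)\cdot\grad(v-u^\eps)+\diver(p-p^\eps)\,(v-u^\eps)\bigr]\dd x .
\]
This vanishes by the divergence theorem, since $v-u^\eps\in H_0^1(\Om)$ and $p-p^\eps\in H(\diver;\Om)$. Its validity needs $p^\eps\in H(\diver;\Om)$, which \eqref{eq:pd-zero-gap} supplies, and $p_\gamma\in H(\diver;\Om)$, which holds by construction. This is the same integration by parts used in \cref{thm:pd-certificate}, applied to differences.

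\textbf{Conclusion.} We obtain
\[
\mathfrak G_\eps(v_\beta,p_\gamma)\le
\Bigl[\tfrac{\overline L_F}{2}\norm{\grad(v_\beta-u^\eps)}^2+\tfrac{\overline L_G}{2}\norm{v_\beta-u^\eps}^2\Bigr]
+\Bigl[\tfrac1{2\alpha}\norm{p_\gamma-p^\eps}^2+\tfrac1{2\mu}\norm{\diver(p_\gamma-p^\eps)}^2\Bigr].
\]
The left side is bounded below by $\mathfrak A_{m,n}^{\eps,\mathrm{pd}}$, and the two brackets decouple. Taking infima over $\beta$ and $\gamma$ independently yields \eqref{eq:pd-separated-bound}.

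\textbf{Main obstacle.} The main technical point is justifying the Bregman upper bounds globally. This requires the curvature bounds \eqref{eq:pd-curvature} to hold for all arguments, not just on trial ranges, so that Taylor's formula with integral remainder applies along the whole segment. It also requires the conjugate curvature \eqref{eq:pd-conjugate-curvature}, which we take as given. Integrability of every term is ensured by \eqref{eq:pd-baseline-regularity} and the quadratic growth bounds.
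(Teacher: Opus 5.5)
Your proof is correct and is essentially the paper's argument. The paper writes $\mathfrak G_\eps(v,p)=[\J_\eps(v)-\J_\eps(u^\eps)]+[\Phi_\eps(p)-\Phi_\eps(p^\eps)]$ using the zero gap, then uses the vanishing first variations at $u^\eps$ and $p^\eps$ and Taylor-expands with \eqref{eq:pd-curvature}--\eqref{eq:pd-conjugate-curvature}; this yields exactly your four Bregman terms, and your pointwise Fenchel--Bregman identity, with the cross term removed by Green's formula on $H_0^1(\Om)\times H(\diver;\Om)$, is just the integrand-level form of that same split.
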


\begin{proof}
Define
\[
\Phi_\eps(p)
:=
\int_\Om
\left[
F^*\!\left(x,\frac{x}{\eps},p\right)
+
G^*(x,f+\diver p)
\right]\dd x.
\]
By zero gap,
\[
\mathfrak G_\eps(v,p)
=
\left[
\J_\eps(v)-\J_\eps(u^\eps)
\right]
+
\left[
\Phi_\eps(p)-\Phi_\eps(p^\eps)
\right].
\]
The first variations of $\J_\eps$ at $u^\eps$ and of $\Phi_\eps$ at
$p^\eps$ vanish. Taylor's formula and
\eqref{eq:pd-curvature}--\eqref{eq:pd-conjugate-curvature} give
\begin{align*}
\J_\eps(v)-\J_\eps(u^\eps)
&\leq
\frac{\overline L_F}{2}
\norm{\grad(v-u^\eps)}_{L^2}^2
+
\frac{\overline L_G}{2}
\norm{v-u^\eps}_{L^2}^2,
\\
\Phi_\eps(p)-\Phi_\eps(p^\eps)
&\leq
\frac1{2\alpha}
\norm{p-p^\eps}_{L^2}^2
+
\frac1{2\mu}
\norm{\diver(p-p^\eps)}_{L^2}^2.
\end{align*}
Minimize independently over the two parameter balls.
\end{proof}

\subsection{Divergence-compatible nonlinear flux closure}

We now work under \cref{ass:corrector-regime}. Define the microscopic
corrected flux
\begin{equation}
\label{eq:micro-flux}
P^0(x,y)
:=
A\left(
y,
\grad u^0(x)
+
\grad_yN\bigl(y,\grad u^0(x)\bigr)
\right).
\end{equation}
Its cell divergence vanishes and its cell average is
$\widehat A(\grad u^0(x))$.

For $\xi\in\R^d$, let
\begin{equation}
\label{eq:flux-discrepancy}
b_i(y,\xi)
:=
A_i\left(
y,\xi+\grad_yN(y,\xi)
\right)
-
\widehat A_i(\xi).
\end{equation}
Then
\[
\int_Yb(\cdot,\xi)\dd y=0,
\qquad
\diver_yb(\cdot,\xi)=0.
\]
A periodic flux corrector is a skew-symmetric tensor
$E_{ji}(y,\xi)=-E_{ij}(y,\xi)$ satisfying
\begin{equation}
\label{eq:flux-corrector}
b_i(y,\xi)
=
\partial_{y_j}E_{ji}(y,\xi).
\end{equation}
The existence of a skew-symmetric periodic flux corrector in
\(H_{\mathrm{per}}^1(Y)\), together with quantitative \(L^2\)-bounds for
its derivatives with respect to the macroscopic gradient variable, follows
from \citet[Lemma~2.5]{WangXuZhao2018}. The stronger
\(W^{1,\infty}\)-regularity required below is imposed separately and is not
claimed to follow from that lemma.

\begin{assumption}[Additional flux-corrector regularity]
\label{ass:flux-corrector}
The tensor $E$ can be chosen so that the field
\begin{equation}
\label{eq:flux-Q0}
Q_i^0(x,y)
:=
\partial_{\xi_k}E_{ji}
\bigl(
y,\grad u^0(x)
\bigr)
\,
\partial_{x_jx_k}u^0(x)
\end{equation}
belongs to $W^{1,\infty}(\Om\times Y;\R^d)$. The fields
$P^0$ and $Q^0$ are $Y$-periodic and bounded in
$W^{1,\infty}(\Om\times Y)$.
\end{assumption}

Define
\begin{equation}
\label{eq:corrected-flux}
\pi^\eps(x)
:=
P^0\!\left(x,\frac{x}{\eps}\right)
+
\eps
Q^0\!\left(x,\frac{x}{\eps}\right).
\end{equation}

\begin{lemma}[Exact divergence identity]
\label{lem:flux-divergence-identity}
Under \cref{ass:flux-corrector},
\begin{equation}
\label{eq:flux-divergence-identity}
\diver\pi^\eps
=
\diver\widehat A(\grad u^0)
=
g(u^0)-f
\quad\text{in }\Om.
\end{equation}
\end{lemma}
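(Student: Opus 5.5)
The proof is a direct two-scale chain-rule computation. The plan is to split $P^0$ into its cell average plus the oscillating discrepancy $b$, and then show that every non-averaged contribution to $\diver\pi^\eps$ cancels by the skew-symmetry of $E$. Throughout, write $y=x/\eps$ and $u_{ij}:=\partial_{x_ix_j}u^0$. For a $Y$-periodic field $\Psi(x,y)$ one has
\[
\diver\bigl[\Psi(x,x/\eps)\bigr]=(\diver_x\Psi)(x,x/\eps)+\eps^{-1}(\diver_y\Psi)(x,x/\eps).
\]

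\textbf{Step 1: the $P^0$ term.} By \eqref{eq:micro-flux} and \eqref{eq:flux-discrepancy},
\[
P^0(x,y)=\widehat A(\grad u^0(x))+b(y,\grad u^0(x)).
\]
The cell problem \eqref{eq:md-cell-problem} gives $\diver_yP^0=0$, so the $\eps^{-1}$ term vanishes. The slow divergence splits as $\diver\widehat A(\grad u^0)$ plus
\[
T_1:=\partial_{\xi_k}b_i(y,\grad u^0)\,u_{ik}.
\]
Using \eqref{eq:flux-corrector}, this becomes
\[
T_1=\partial_{y_j}\partial_{\xi_k}E_{ji}(y,\grad u^0)\,u_{ik}.
\]

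\textbf{Step 2: the $\eps Q^0$ term.} Here $\diver[\eps Q^0(x,x/\eps)]=\eps\,\diver_xQ^0+\diver_yQ^0$. From \eqref{eq:flux-Q0},
\[
\diver_yQ^0=\partial_{y_i}\partial_{\xi_k}E_{ji}\,u_{jk}.
\]
Relabel $i\leftrightarrow j$ in $T_1$ to get $\partial_{y_i}\partial_{\xi_k}E_{ij}\,u_{jk}$. Skew-symmetry $E_{ij}=-E_{ji}$ then gives $T_1=-\diver_yQ^0$, so these two $O(1)$ oscillatory terms cancel exactly.

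\textbf{Step 3: the remaining $\eps\,\diver_xQ^0$ term.} I would avoid computing third derivatives of $u^0$, which are not available under \eqref{eq:corrector-regularity}. Instead, for frozen $y$, set $\Psi_{ji}(x,y):=E_{ji}(y,\grad u^0(x))$. The chain rule gives
\[
Q_i^0(x,y)=\partial_{x_j}\Psi_{ji}(x,y),
\]
so
\[
\diver_xQ^0=\partial_{x_i}\partial_{x_j}\Psi_{ji}=0
\]
in the sense of distributions, because $\Psi$ is skew in $(i,j)$ and mixed distributional derivatives commute. Since $Q^0\in W^{1,\infty}$ by \cref{ass:flux-corrector}, this distributional identity holds almost everywhere.

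\textbf{Conclusion and main obstacle.} Combining the three steps gives $\diver\pi^\eps=\diver\widehat A(\grad u^0)$, and the homogenized equation \eqref{eq:md-homogenized-pde} identifies this with $g(u^0)-f$. The main obstacle is rigor in the chain rule: $\Psi$ and $b$ must be differentiable in $\xi$, and the composition with $x\mapsto(x,x/\eps)$ must be legitimate. I would handle this by working with the $W^{1,\infty}(\Om\times Y)$ fields $P^0$ and $Q^0$ granted by \cref{ass:flux-corrector}, for which the two-scale chain rule holds almost everywhere. Each cancellation would be checked first against smooth test functions, so that only the stated regularity is used.
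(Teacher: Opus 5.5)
Your computation is correct. It uses the same two ingredients as the paper, namely $b=\diver_yE$ and the skew-symmetry of $E$, but organizes them differently. The paper never splits $\diver\pi^\eps$ into slow and fast parts. The first-order chain rule applied to $E^\eps_{ji}(x):=E_{ji}(x/\eps,\grad u^0(x))$ gives $\eps\,\partial_{x_j}E^\eps_{ji}=b_i(x/\eps,\grad u^0)+\eps Q^0_i(x,x/\eps)$. Hence $\pi^\eps_i=\widehat A_i(\grad u^0)+\eps\,\partial_{x_j}E^\eps_{ji}$, and the double divergence $\partial_{x_i}\partial_{x_j}E^\eps_{ji}$ vanishes by skew-symmetry.

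Your three steps are exactly this double divergence unpacked:
\begin{itemize}
\item the $\eps^{-1}\partial_{y_i}\partial_{y_j}E_{ji}$ piece, which you remove with the cell problem, although it already vanishes by skew-symmetry;
\item the mixed $x$--$y$ pieces, which is your cancellation of $T_1$ against $\diver_yQ^0$;
\item the purely slow piece, your Step~3, which is the paper's idea applied in $x$ alone.
\end{itemize}

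The packaged form is more economical in regularity. It needs only first derivatives of $E$ along $(x/\eps,\grad u^0(x))$. Its final cancellation is purely distributional: test with $\phi\in C_c^\infty(\Om)$ and note that $E^\eps_{ji}\,\partial_i\partial_j\phi$ sums to zero. This also gives $\pi^\eps\in H(\diver;\Om)$ directly. Your route additionally needs $\partial_\xi b$ and the mixed derivatives $\partial_{y_j}\partial_{\xi_k}E_{ji}$ with the order of differentiation interchanged, and \cref{ass:flux-corrector} provides neither. You could avoid this by writing $b_i(y,\grad u^0(x))=\partial_{y_j}\Psi_{ji}(x,y)$ and $Q^0_i=\partial_{x_j}\Psi_{ji}$; the Step~2 cancellation then also becomes distributional in $\Om\times Y$.

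Your proposed remedy for rigor does not work as stated. Membership of $P^0,Q^0$ in $W^{1,\infty}(\Om\times Y)$ does not by itself justify the two-scale chain rule. The derivatives $\grad_x$ and $\grad_y$ of such fields are defined only almost everywhere in $\Om\times Y$, while the diagonal $\{(x,x/\eps):x\in\Om\}$ is a null set there. So $(\diver_x\Psi)(x,x/\eps)$ need not make sense, and $\diver_xQ^0=0$ a.e.\ in $\Om\times Y$ does not imply that it vanishes along the diagonal. You need either continuity of the relevant derivatives or the paper's packaging, which reduces the issue to a single first-order chain rule for $E$.
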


\begin{proof}
Write
\[
E_{ji}^\eps(x)
:=
E_{ji}\left(
\frac{x}{\eps},
\grad u^0(x)
\right).
\]
By \eqref{eq:flux-corrector} and the chain rule,
\[
\eps\partial_{x_j}E_{ji}^\eps
=
b_i\left(
\frac{x}{\eps},
\grad u^0
\right)
+
\eps Q_i^0\left(
x,\frac{x}{\eps}
\right).
\]
Hence
\[
\pi_i^\eps
=
\widehat A_i(\grad u^0)
+
\eps\partial_{x_j}E_{ji}^\eps.
\]
Taking $\partial_{x_i}$ and summing gives
\[
\diver\pi^\eps
=
\diver\widehat A(\grad u^0)
+
\eps
\partial_{x_i}\partial_{x_j}E_{ji}^\eps.
\]
The last term vanishes because the second derivatives are symmetric in
$(i,j)$ while $E_{ji}^\eps=-E_{ij}^\eps$. The homogenized equation
\eqref{eq:md-homogenized-pde} gives the final identity.
\end{proof}

\begin{proposition}[Flux corrector in $H(\diver)$]
\label{prop:flux-Hdiv-corrector}
Under
\cref{ass:corrector-regime,ass:flux-corrector}, there exists
$C_{\mathrm{fc}}>0$, independent of $\eps$, such that
\begin{equation}
\label{eq:flux-Hdiv-corrector}
\norm{p^\eps-\pi^\eps}_{L^2(\Om)}
+
\norm{\diver(p^\eps-\pi^\eps)}_{L^2(\Om)}
\leq
C_{\mathrm{fc}}\sqrt\eps.
\end{equation}
\end{proposition}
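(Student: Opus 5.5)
The estimate splits into an $L^2$ part for the flux and a divergence part. I will treat the divergence part first because it is essentially algebraic. By \eqref{eq:pd-zero-gap} in the periodic setting of \cref{ass:corrector-regime}, $\diver p^\eps = g(u^\eps)-f$. By \cref{lem:flux-divergence-identity}, $\diver\pi^\eps = g(u^0)-f$. Hence
\[
\diver(p^\eps-\pi^\eps)=g(u^\eps)-g(u^0),
\]
and the Lipschitz bound on $g$ over the common range gives
\[
\norm{\diver(p^\eps-\pi^\eps)}_{L^2}\le \mathrm{Lip}(g)\,\norm{u^\eps-u^0}_{L^2}.
\]
Next, $\norm{u^0-w^\eps}_{L^2}=\eps\norm{\eta_\eps\chi^0(\cdot,\cdot/\eps)}_{L^2}\le \eps\norm{\chi^0}_{L^\infty}|\Om|^{1/2}$, so the triangle inequality with \eqref{eq:md-quantitative-corrector} yields
\[
\norm{u^\eps-u^0}_{L^2}\le C_{\mathrm{hom}}\sqrt\eps+C\eps\le C\sqrt\eps .
\]

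For the $L^2$ part, write $p^\eps=A(x/\eps,\grad u^\eps)$ and split
\[
p^\eps-\pi^\eps=\bigl[A(\tfrac{x}{\eps},\grad u^\eps)-A(\tfrac{x}{\eps},\grad w^\eps)\bigr]+\bigl[A(\tfrac{x}{\eps},\grad w^\eps)-P^0(x,\tfrac{x}{\eps})\bigr]-\eps Q^0(x,\tfrac{x}{\eps}).
\]
The first bracket is at most $\Lambda\norm{\grad(u^\eps-w^\eps)}_{L^2}\le\Lambda C_{\mathrm{hom}}\sqrt\eps$ by \eqref{eq:md-structure} and \eqref{eq:md-quantitative-corrector}. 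The last term is $O(\eps)$ in $L^2$ because $Q^0$ is bounded by \cref{ass:flux-corrector}.

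The main work is the middle bracket. Differentiating $w^\eps$ as in \eqref{eq:md-feature-gradient} gives
\[
\grad w^\eps=\grad u^0+\eta_\eps\grad_y\chi^0(x,\tfrac{x}{\eps})+\eps\grad\eta_\eps\,\chi^0+\eps\eta_\eps\grad_x\chi^0 .
\]
By definition \eqref{eq:micro-flux}, $P^0(x,y)=A(y,\grad u^0+\grad_y\chi^0)$. The Lipschitz bound on $A$ then controls the middle bracket pointwise by
\[
\Lambda\bigl(|1-\eta_\eps|\,|\grad_y\chi^0|+\eps|\grad\eta_\eps|\,|\chi^0|+\eps|\grad_x\chi^0|\bigr).
\]
The last term is $O(\eps)$ uniformly because $\chi^0\in W^{1,\infty}$. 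The first two terms are bounded by $C(\norm{\chi^0}_{W^{1,\infty}},C_\eta)$, using $\eps\norm{\grad\eta_\eps}_\infty\le C_\eta$. Both are supported in the boundary layer $\{\operatorname{dist}(x,\partial\Om)\le2\eps\}$. Since $\Om$ is $C^{1,1}$ (Lipschitz would also suffice), this layer has measure at most $C\eps$, so their $L^2$ norms are $O(\sqrt\eps)$. This boundary-layer estimate is exactly where the rate $\sqrt\eps$, rather than $\eps$, enters. It is the one step that needs care: the cutoff terms are only bounded, not small, and the gain comes solely from the small support.

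Collecting the three pieces of the $L^2$ part with the divergence bound gives \eqref{eq:flux-Hdiv-corrector}. The constant $C_{\mathrm{fc}}$ depends only on $\Lambda$, $\mathrm{Lip}(g)$, $C_{\mathrm{hom}}$, $C_\eta$, $|\Om|$, the $W^{1,\infty}$ norms of $\chi^0$ and $Q^0$, and the boundary-layer constant of $\Om$. All of these are independent of $\eps$, using $\eps\le\sqrt\eps$ for $\eps\le1$.
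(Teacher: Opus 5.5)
Your proposal is correct and follows essentially the same route as the paper's proof. For the $L^2$ part you combine the Lipschitz bound on $A$, the corrector estimate \eqref{eq:md-quantitative-corrector}, and the $O(\eps)$-measure boundary strip for the cutoff terms, plus the $O(\eps)$ bound on $\eps Q^0$; for the divergence part you use the exact equations $\diver p^\eps=g(u^\eps)-f$ and $\diver\pi^\eps=g(u^0)-f$ with the Lipschitz bound on $g$ and $\norm{u^\eps-u^0}_{L^2}\le C\sqrt\eps$. Your pointwise treatment of the middle bracket is simply a more explicit version of the paper's estimate $\norm{\grad w^\eps-\Xi^0(\cdot,\cdot/\eps)}_{L^2}\le C\sqrt\eps$.
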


\begin{proof}
Let
\[
\Xi^0(x,y)
:=
\grad u^0(x)
+
\grad_yN\bigl(y,\grad u^0(x)\bigr).
\]
The cutoff structure in \eqref{eq:state-corrector} gives
\begin{equation}
\label{eq:gradient-corrector-to-Xi}
\norm{
\grad w^\eps
-
\Xi^0(\cdot,\cdot/\eps)
}_{L^2(\Om)}
\leq
C\sqrt\eps.
\end{equation}
Indeed, away from the boundary strip the difference is
$\eps\grad_x\chi^0$, while the remaining terms are supported in a strip
of measure $O(\eps)$ and are uniformly bounded by
\eqref{eq:md-cutoff} and \eqref{eq:corrector-regularity}.

By Lipschitz continuity of $A$,
\begin{align*}
\norm{
p^\eps
-
P^0(\cdot,\cdot/\eps)
}_{L^2}
&\leq
\Lambda
\norm{
\grad u^\eps-\grad w^\eps
}_{L^2}
\\
&\quad+
\Lambda
\norm{
\grad w^\eps-\Xi^0(\cdot,\cdot/\eps)
}_{L^2}
\\
&\leq
C\sqrt\eps.
\end{align*}
Since $\eps Q^0(\cdot,\cdot/\eps)$ is $O(\eps)$ in $L^2$,
\[
\norm{p^\eps-\pi^\eps}_{L^2}
\leq
C\sqrt\eps.
\]

For the divergence, use the exact equations:
\[
\diver p^\eps=g(u^\eps)-f,
\qquad
\diver\pi^\eps=g(u^0)-f.
\]
Hence
\[
\norm{\diver(p^\eps-\pi^\eps)}_{L^2}
\leq
L_G
\norm{u^\eps-u^0}_{L^2}.
\]
Now
\[
\norm{u^\eps-u^0}_{L^2}
\leq
\norm{u^\eps-w^\eps}_{L^2}
+
\eps
\norm{
\eta_\eps\chi^0(\cdot,\cdot/\eps)
}_{L^2}
\leq
C\sqrt\eps.
\]
This proves \eqref{eq:flux-Hdiv-corrector}.
\end{proof}

Let
\[
\mathcal P_{n_0}
\subset
W^{1,\infty}_{\mathrm{per}}
(\Om\times Y;\R^d),
\qquad
\mathcal Q_{n_1}
\subset
W^{1,\infty}_{\mathrm{per}}
(\Om\times Y;\R^d)
\]
be finite-dimensional linear spaces such that
\begin{equation}
\label{eq:solenoidal-flux-space}
\diver_yP=0
\qquad
\text{for every }P\in\mathcal P_{n_0}.
\end{equation}
Define
\begin{align}
\Psi_{0,n_0}
&:=
\inf_{P\in\mathcal P_{n_0}}
\norm{P-P^0}_{W^{1,\infty}(\Om\times Y)},
\label{eq:Psi0}\\
\Psi_{1,n_1}
&:=
\inf_{Q\in\mathcal Q_{n_1}}
\norm{Q-Q^0}_{W^{1,\infty}(\Om\times Y)}.
\label{eq:Psi1}
\end{align}
For $n=n_0+n_1$, define the flux class
\begin{equation}
\label{eq:two-scale-flux-class}
\mathcal P_n^\eps
:=
\left\{
P\!\left(x,\frac{x}{\eps}\right)
+
\eps
Q\!\left(x,\frac{x}{\eps}\right)
:
P\in\mathcal P_{n_0},
\ Q\in\mathcal Q_{n_1}
\right\}.
\end{equation}
Its divergence is
\begin{equation}
\label{eq:two-scale-flux-divergence}
\diver
\left[
P(\cdot,\cdot/\eps)
+
\eps Q(\cdot,\cdot/\eps)
\right]
=
\diver_xP(\cdot,\cdot/\eps)
+
\diver_yQ(\cdot,\cdot/\eps)
+
\eps\diver_xQ(\cdot,\cdot/\eps),
\end{equation}
so the $L^\infty$ envelopes of both the flux and its divergence are
uniform in $\eps$.

\begin{proposition}[Closure of the dual approximation term]
\label{prop:dual-closure}
Assume the parameter ball contains near-best approximants from
\eqref{eq:Psi0}--\eqref{eq:Psi1}. Then
\begin{equation}
\label{eq:dual-closure}
\mathfrak Q_n^\eps
\leq
C_{\mathrm{flux}}
\left(
\eps
+
\Psi_{0,n_0}^2
+
\Psi_{1,n_1}^2
\right),
\end{equation}
where $C_{\mathrm{flux}}$ is independent of $\eps$.
\end{proposition}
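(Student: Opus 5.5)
The plan is to mirror the proof of \cref{thm:md-approximation-closure}: insert the corrected flux $\pi^\eps$ from \eqref{eq:corrected-flux} between $p^\eps$ and a discrete flux, and control each piece in $H(\diver;\Om)$ uniformly in $\eps$.

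\textbf{Step 1: choice of discrete flux.} Choose $P_n\in\mathcal P_{n_0}$ and $Q_n\in\mathcal Q_{n_1}$ that realize the infima \eqref{eq:Psi0}--\eqref{eq:Psi1} up to an arbitrary $\eta>0$. By hypothesis the corresponding coefficient vector lies in $\B_S^n$. Set
\[
p_n^\eps:=P_n(\cdot,\cdot/\eps)+\eps Q_n(\cdot,\cdot/\eps),
\]
and write $D_P:=P_n-P^0$ and $D_Q:=Q_n-Q^0$. Then
\[
p_n^\eps-\pi^\eps=D_P(\cdot,\cdot/\eps)+\eps D_Q(\cdot,\cdot/\eps).
\]
The $L^2$ part is immediate:
\[
\norm{p_n^\eps-\pi^\eps}_{L^2}\le\abs{\Om}^{1/2}\bigl(\norm{D_P}_{L^\infty}+\norm{D_Q}_{L^\infty}\bigr).
\]

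\textbf{Step 2: the divergence of the difference (main obstacle).} Apply \eqref{eq:two-scale-flux-divergence} to the difference:
\[
\diver(p_n^\eps-\pi^\eps)=\diver_xD_P+\eps^{-1}\diver_yD_P+\diver_yD_Q+\eps\diver_xD_Q,
\]
with every term evaluated at $(x,x/\eps)$. The only dangerous term is $\eps^{-1}\diver_yD_P$.

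This term vanishes identically. Indeed, $\diver_yP_n=0$ by \eqref{eq:solenoidal-flux-space}, and $\diver_yP^0=0$ because $P^0$ is built from the cell solution \eqref{eq:md-cell-problem}. Hence $\diver_yD_P=0$.

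I note that \eqref{eq:two-scale-flux-divergence} as printed already omits this $\eps^{-1}$ term, so it is implicitly using the solenoidal constraint. I would make that use explicit at this point, since it is exactly where divergence compatibility enters.

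The remaining terms are bounded pointwise by $\norm{D_P}_{W^{1,\infty}}+2\norm{D_Q}_{W^{1,\infty}}$ for $\eps\le1$. Therefore
\[
\norm{p_n^\eps-\pi^\eps}_{L^2}+\norm{\diver(p_n^\eps-\pi^\eps)}_{L^2}\le C\bigl(\Psi_{0,n_0}+\Psi_{1,n_1}+2\eta\bigr),
\]
where $C$ depends only on $\abs{\Om}$.

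\textbf{Step 3: conclusion.} Combine Step 2 with \cref{prop:flux-Hdiv-corrector} through the triangle inequality:
\[
\norm{p_n^\eps-p^\eps}_{L^2}+\norm{\diver(p_n^\eps-p^\eps)}_{L^2}\le C_{\mathrm{fc}}\sqrt\eps+C\bigl(\Psi_{0,n_0}+\Psi_{1,n_1}+2\eta\bigr).
\]
Insert this into the definition \eqref{eq:pd-dual-approx} of $\mathfrak Q_n^\eps$, bounding the weights by $\max\{1/(2\alpha),1/(2\mu)\}$. Then use $(a+b+c)^2\le3(a^2+b^2+c^2)$ and let $\eta\downarrow0$. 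This gives \eqref{eq:dual-closure} with
\[
C_{\mathrm{flux}}=3\max\Bigl\{\frac1{2\alpha},\frac1{2\mu}\Bigr\}\max\{C_{\mathrm{fc}}^2,C^2\},
\]
which is independent of $\eps$ because $C_{\mathrm{fc}}$ and $C$ are.
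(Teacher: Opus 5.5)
Your proposal is correct and follows essentially the same route as the paper. You insert the corrected flux $\pi^\eps$ and bound $p_n^\eps-\pi^\eps$ in $L^2$ and in divergence by $C(\Psi_{0,n_0}+\Psi_{1,n_1})$ via \eqref{eq:two-scale-flux-divergence}, then combine with \cref{prop:flux-Hdiv-corrector} and $(a+b+c)^2\le3(a^2+b^2+c^2)$ in \eqref{eq:pd-dual-approx}; your explicit remark that the $\eps^{-1}\diver_y(P_n-P^0)$ term vanishes because both $P_n$ and $P^0$ are cell-solenoidal is exactly the step the paper uses implicitly.
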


\begin{proof}
Choose $P_n\in\mathcal P_{n_0}$ and
$Q_n\in\mathcal Q_{n_1}$ near the infima in
\eqref{eq:Psi0}--\eqref{eq:Psi1}, and set
\[
p_n^\eps(x)
:=
P_n\!\left(x,\frac{x}{\eps}\right)
+
\eps
Q_n\!\left(x,\frac{x}{\eps}\right).
\]
The $L^2$ estimate gives
\[
\norm{p_n^\eps-\pi^\eps}_{L^2}
\leq
C\left(
\Psi_{0,n_0}+\Psi_{1,n_1}
\right).
\]
Using
\eqref{eq:solenoidal-flux-space} and
\eqref{eq:two-scale-flux-divergence},
\[
\norm{
\diver(p_n^\eps-\pi^\eps)
}_{L^2}
\leq
C\left(
\Psi_{0,n_0}+\Psi_{1,n_1}
\right).
\]
Combine these bounds with
\cref{prop:flux-Hdiv-corrector} and use
$(a+b+c)^2\leq3(a^2+b^2+c^2)$ in
\eqref{eq:pd-dual-approx}.
\end{proof}

The state approximation term also satisfies
\begin{equation}
\label{eq:pd-state-closure}
\mathfrak E_m^\eps
\leq
C_{\mathrm{state}}
\left(
\eps+\Phi_{0,m_0}^2+\Phi_{1,m_1}^2
\right)
\end{equation}
by the proof of \cref{thm:md-approximation-closure} and the upper
curvature bounds in \eqref{eq:pd-curvature}.

\begin{theorem}[Primal--dual bound under flux-corrector regularity]
\label{thm:pd-closed}
Let $(\beta_K,\gamma_K)$ be generated by projected gradient descent on
the empirical primal--dual gap using the state class
\eqref{eq:md-trial-class} and flux class
\eqref{eq:two-scale-flux-class}. Under
\cref{ass:structure,ass:corrector-regime,ass:pd-regularity,ass:flux-corrector}, with probability at least $1-\delta$,
\begin{align}
&\frac\alpha2
\norm{v_{\beta_K}-u^\eps}_{\V}^2
+
\frac\mu2
\norm{v_{\beta_K}-u^\eps}_{L^2(\Om)}^2
\notag\\
&\qquad\leq
C_{\mathrm{pd}}
\left(
\eps
+
\Phi_{0,m_0}^2
+
\Phi_{1,m_1}^2
+
\Psi_{0,n_0}^2
+
\Psi_{1,n_1}^2
\right)
\notag\\
&\qquad\quad+
2\Delta_{N,\delta}^{\mathrm{pd}}
+
\frac{
2L_{\mathrm{par}}^{\mathrm{pd}}(R^2+S^2)
}{K}.
\label{eq:pd-closed}
\end{align}
Every sampling, stability, and finite-iteration constant is independent
of $\eps$.
\end{theorem}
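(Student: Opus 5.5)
The argument copies the proof of \cref{thm:end-to-end}, with the energy $\J_\eps$ replaced by the gap $\mathfrak G_\eps$ and the lower bound supplied by \cref{thm:pd-certificate} instead of \cref{lem:energy-certificate}.

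First, by \eqref{eq:pd-reliability} applied with $v=v_{\beta_K}$ and $p=p_{\gamma_K}$, the left-hand side of \eqref{eq:pd-closed} is at most $\mathfrak G_\eps(v_{\beta_K},p_{\gamma_K})$. It therefore suffices to bound the population gap at the iterate.

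Next, for $\eta>0$ pick $(\beta_\eta,\gamma_\eta)\in\Theta_{R,S}$ with $\mathfrak G_\eps(v_{\beta_\eta},p_{\gamma_\eta})\le\mathfrak A_{m,n}^{\eps,\mathrm{pd}}+\eta$. On the event of \cref{lem:pd-uniform-deviation}, insert the empirical gap twice:
\[
\mathfrak G_\eps(v_{\beta_K},p_{\gamma_K})
\le
\widehat{\mathfrak G}_{\eps,N}(\beta_K,\gamma_K)-\widehat{\mathfrak G}_{\eps,N}(\beta_\eta,\gamma_\eta)+2\Delta^{\mathrm{pd}}_{N,\delta}+\mathfrak A_{m,n}^{\eps,\mathrm{pd}}+\eta .
\]
The empirical difference is at most the optimization error in \eqref{eq:pd-pgd-rate}, since $\widehat{\mathfrak G}_{\eps,N}(\beta_\eta,\gamma_\eta)\ge\min_{\Theta_{R,S}}\widehat{\mathfrak G}_{\eps,N}$. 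Letting $\eta\downarrow0$ yields the sampling and iteration terms of \eqref{eq:pd-closed}.

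For the approximation term, \cref{prop:pd-separated-approximation} gives $\mathfrak A^{\eps,\mathrm{pd}}_{m,n}\le\mathfrak E_m^\eps+\mathfrak Q_n^\eps$. These are bounded by \eqref{eq:pd-state-closure} and \cref{prop:dual-closure}, and we take $C_{\mathrm{pd}}=C_{\mathrm{state}}+C_{\mathrm{flux}}$. Here it must be assumed, as in \cref{thm:md-approximation-closure} and \cref{prop:dual-closure}, that the balls $\B_R^m$ and $\B_S^n$ contain the near-best approximants.

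The main obstacle is checking that every constant is independent of $\eps$. This comes down to showing that the envelopes $B_0,B_1,B_2,B_3$ of the two-scale classes are $\eps$-uniform. For the state class, \eqref{eq:md-feature-gradient} together with $\eps\norm{\grad\eta_\eps}_{L^\infty}\le C_\eta$ gives this. For the flux class, \eqref{eq:two-scale-flux-divergence} gives it for the flux and its divergence. One point needs care: for a general element of $\mathcal P_{n_0}$ the divergence of the $P$-part contains $\eps^{-1}\diver_yP$, and this term vanishes only because of the solenoidal constraint \eqref{eq:solenoidal-flux-space}.

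Once these envelopes are $\eps$-uniform, so are the remaining quantities:
\begin{itemize}
\item the derivative envelopes $M_F,M_G,M_{F^*},M_{G^*}$, which are taken over $\eps$-independent ranges;
\item $C_0^{\mathrm{pd}}$, which does not involve the fast variable in a differentiated way;
\item $L_{\mathrm{par}}^{\mathrm{pd}}$, which depends only on $\overline L_F,\overline L_G,\alpha,\mu$ and the $B_i$;
\item consequently $\Delta^{\mathrm{pd}}_{N,\delta}$ as well.
\end{itemize}
Finally, the constants $C_{\mathrm{hom}}$, $C_{\mathrm{lift}}$ and $C_{\mathrm{fc}}$ entering $C_{\mathrm{state}}$ and $C_{\mathrm{flux}}$ are $\eps$-independent by hypothesis and by \cref{prop:flux-Hdiv-corrector}.
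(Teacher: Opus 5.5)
Your proposal is correct and follows the paper's proof essentially step for step. You bound the left-hand side by the population gap via \cref{thm:pd-certificate}, insert the empirical gap twice around a population near-minimizer on the event of \cref{lem:pd-uniform-deviation}, absorb the empirical difference into \eqref{eq:pd-pgd-rate}, and close $\mathfrak A_{m,n}^{\eps,\mathrm{pd}}$ with \cref{prop:pd-separated-approximation}, \eqref{eq:pd-state-closure} and \cref{prop:dual-closure}. Your check that $B_0,\dots,B_3$ are $\eps$-uniform (using $\eps\norm{\grad\eta_\eps}_{L^\infty}\le C_\eta$ and the solenoidal constraint) and your note that the balls must contain the near-best approximants make explicit what the paper's terse proof leaves implicit.
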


\begin{proof}
On the event in \cref{lem:pd-uniform-deviation}, compare
$(\beta_K,\gamma_K)$ with a population near-minimizer and insert the
empirical gap twice. Equation \eqref{eq:pd-pgd-rate} gives
\[
\mathfrak G_\eps(v_{\beta_K},p_{\gamma_K})
\leq
\mathfrak A_{m,n}^{\eps,\mathrm{pd}}
+
2\Delta_{N,\delta}^{\mathrm{pd}}
+
\frac{
2L_{\mathrm{par}}^{\mathrm{pd}}(R^2+S^2)
}{K}.
\]
Apply \cref{thm:pd-certificate,prop:pd-separated-approximation},
then use \eqref{eq:pd-state-closure} and
\cref{prop:dual-closure}.
\end{proof}

\begin{remark}
The condition $\diver_yP=0$ in \eqref{eq:solenoidal-flux-space} is
structural. Without it, differentiating
$P(x,x/\eps)$ would reintroduce an $\eps^{-1}\diver_yP$ term into the
dual-state feature and destroy the scale-uniform divergence envelope.
\end{remark}

\section{General strong-form statistical ill-conditioning}
\label{sec:strong-form}

We now show that the strong formulation exhibits the opposite scale
behavior. Specialize to
\begin{equation}
\label{eq:sf-general-pde}
-\diver A\!\left(\frac{x}{\eps},\grad u^\eps\right)
+
g(u^\eps)
=0
\quad\text{in }\Om,
\qquad
u^\eps=0
\quad\text{on }\partial\Om,
\end{equation}
where
\begin{equation}
\label{eq:sf-zero-normalization}
A(y,0)=0,
\qquad
g(0)=0.
\end{equation}
Then $u^\eps\equiv0$ is a solution.

For
$v\in W^{2,\infty}(\Om)\cap H_0^1(\Om)$, define
\begin{equation}
\label{eq:sf-residual-general}
\mathscr R_\eps[v]
:=
-\diver
A\!\left(
\frac{x}{\eps},\grad v
\right)
+
g(v).
\end{equation}
Set
\begin{equation}
\label{eq:sf-H-def}
H(y,\xi):=\diver_yA(y,\xi).
\end{equation}
The chain rule gives
\begin{equation}
\label{eq:sf-chain-rule}
\mathscr R_\eps[v]
=
-\frac1\eps
H\!\left(
\frac{x}{\eps},\grad v
\right)
-
D_\xi A\!\left(
\frac{x}{\eps},\grad v
\right):D^2v
+
g(v).
\end{equation}

\begin{assumption}[Strong-form nondegeneracy]
\label{ass:sf-general}
The flux belongs to
$C_{\mathrm{per}}^2(Y\times\R^d;\R^d)$. Fix
\[
v_\star
\in
W^{2,\infty}(\Om)\cap H_0^1(\Om),
\]
and assume that $D_\xi A$, $H$, $D_yH$, and $D_\xi H$ are bounded on
the compact gradient range of $v_\star$. Assume also that
$g(v_\star)\in L^\infty(\Om)$ and
\begin{equation}
\label{eq:sf-nondegeneracy}
\Theta_\star
:=
\frac1{\abs{\Om}}
\int_\Om\int_Y
\left|
H\bigl(
y,\grad v_\star(x)
\bigr)
\right|^2
\dd y\dd x
>0.
\end{equation}
\end{assumption}

\subsection{Periodic averaging}

\begin{lemma}[Two-scale averaging along a smooth field]
\label{lem:sf-averaging}
Let $K\subset\R^q$ be compact,
$b\in C_{\mathrm{per}}^1(Y\times K)$, and
$\zeta\in W^{1,\infty}(\Om;K)$. With
\[
\overline b(\zeta)
:=
\int_Yb(y,\zeta)\dd y,
\]
there is $C_{\mathrm{av}}>0$ such that
\begin{equation}
\label{eq:sf-averaging}
\left|
\int_\Om
b\!\left(
\frac{x}{\eps},\zeta(x)
\right)\dd x
-
\int_\Om
\overline b(\zeta(x))\dd x
\right|
\leq
C_{\mathrm{av}}\eps.
\end{equation}
\end{lemma}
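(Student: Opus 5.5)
Our plan is the standard cell-decomposition argument, with the remainder controlled by the Lipschitz regularity of $b$ and $\zeta$ and a boundary-layer estimate for the Lipschitz domain $\Om$. Cover $\R^d$ by the $\eps$-cells $Q_k^\eps:=\eps(k+Y)$, $k\in\mathbb Z^d$. Split them into interior cells $I_\eps:=\{k: Q_k^\eps\subset\Om\}$ and boundary cells $B_\eps:=\{k: Q_k^\eps\cap\Om\neq\emptyset,\ Q_k^\eps\not\subset\Om\}$. Every boundary cell lies in the tubular set $\{x:\operatorname{dist}(x,\partial\Om)\le\sqrt d\,\eps\}$. Since $\Om$ is bounded and Lipschitz, this set has measure at most $C_\Om\eps$ for $\eps\le1$; this is the standard Minkowski-content bound for Lipschitz boundaries. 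Both integrands are bounded by $\|b\|_{L^\infty(Y\times K)}$, which is finite since $K$ is compact and $b$ is continuous and periodic. Hence the boundary cells contribute at most $2\|b\|_\infty C_\Om\eps$ to the difference.

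On each interior cell, freeze the slow variable. Let $x_k$ be a corner of $Q_k^\eps$ and write
\[
b\!\left(\tfrac{x}{\eps},\zeta(x)\right)-\overline b(\zeta(x))
=\Bigl[b\!\left(\tfrac{x}{\eps},\zeta(x)\right)-b\!\left(\tfrac{x}{\eps},\zeta(x_k)\right)\Bigr]
+\Bigl[b\!\left(\tfrac{x}{\eps},\zeta(x_k)\right)-\overline b(\zeta(x_k))\Bigr]
+\Bigl[\overline b(\zeta(x_k))-\overline b(\zeta(x))\Bigr].
\]
The middle bracket integrates to exactly zero over $Q_k^\eps$. This follows from the change of variables $y=x/\eps-k$ and $Y$-periodicity: $\int_{Q_k^\eps}b(x/\eps,\zeta(x_k))\dd x=\eps^d\overline b(\zeta(x_k))$. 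For the outer brackets, $b$ and $\overline b$ are Lipschitz in the second variable, with constant $\|D_\zeta b\|_{L^\infty}$, because $b\in C^1_{\mathrm{per}}(Y\times K)$. Also $|\zeta(x)-\zeta(x_k)|\le\sqrt d\,\eps\|\grad\zeta\|_{L^\infty}$ on the cell. So each outer bracket is $O(\eps)$ pointwise, and its integral over the cell is $O(\eps\cdot\eps^d)$. Summing over at most $|\Om|\eps^{-d}$ interior cells gives a total of $2\sqrt d\,\|D_\zeta b\|_\infty\|\grad\zeta\|_\infty|\Om|\,\eps$.

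Adding the two contributions gives \eqref{eq:sf-averaging} with
\[
C_{\mathrm{av}}=2\|b\|_\infty C_\Om+2\sqrt d\,\|D_\zeta b\|_\infty\|\grad\zeta\|_\infty|\Om|,
\]
which is independent of $\eps$.

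One technical point needs care. The mean-value step uses the segment between $\zeta(x)$ and $\zeta(x_k)$, and this segment must lie in the domain of $b$. If $K$ is not convex, we first extend $b$ to a $C^1$ periodic function on a convex neighbourhood of $K$, for example by a Whitney-type extension. Alternatively we use a Lipschitz bound on $K$ directly, which is automatic when $K$ is the closed ball containing the gradient range, as in the intended application. We also need $\zeta$ to be Lipschitz along the cell, which holds because the interior cells are convex subsets of $\Om$. The only genuinely geometric input is the $O(\eps)$ measure of the boundary strip; for a bounded Lipschitz domain we obtain it by covering $\partial\Om$ with finitely many Lipschitz-graph charts. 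We expect this to be the main obstacle, and it is standard.
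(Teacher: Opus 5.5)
Your proof is correct, but it takes a different route from the paper. The paper does not decompose $\Om$ into $\eps$-cells. Instead, for each frozen $\zeta$ it solves the periodic cell problem $-\Delta_y\varphi=b-\overline b$ and sets $\Psi=-\grad_y\varphi$, so that $\diver_y\Psi=b-\overline b$. The chain rule then writes the oscillation as an exact $\eps$-divergence plus a bulk term:
\[
b\!\left(\tfrac{x}{\eps},\zeta(x)\right)-\overline b(\zeta(x))=\eps\,\diver_x\!\left[\Psi\!\left(\tfrac{x}{\eps},\zeta(x)\right)\right]-\eps\,D_\zeta\Psi\!\left(\tfrac{x}{\eps},\zeta(x)\right):\grad\zeta(x).
\]
The divergence theorem turns the first term into a boundary flux of size $\eps\|\Psi\|_\infty\mathcal H^{d-1}(\partial\Om)$, and the second term is bounded by $\eps\|D_\zeta\Psi\|_\infty\|\grad\zeta\|_\infty\abs{\Om}$.

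The trade-off is as follows. Your freezing argument needs only that $b$ be bounded and $Y$-periodic in $y$ and Lipschitz in $\zeta$ uniformly in $y$, with no auxiliary cell PDE and no elliptic regularity. Its one geometric input is the strip estimate $\abs{\{x:\operatorname{dist}(x,\partial\Om)\le r\}}\le C_\Om r$. The paper replaces that estimate by the divergence theorem and the finite surface measure of $\partial\Om$. In return it needs $\|\Psi\|_\infty+\|D_\zeta\Psi\|_\infty\le C$ uniformly on $K$, which rests on periodic elliptic regularity for the cell problem and on the differentiable dependence of its solution on $\zeta$. What the paper gains is the standard oscillating-test-function identity of homogenization: an explicit representation of the averaging error, not just a bound.

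Two small remarks on your write-up. First, no Whitney extension is needed. For non-convex $K$, $C^1_{\mathrm{per}}(Y\times K)$ must mean $C^1$ on a neighbourhood of the compact set $\overline Y\times K$, and the paper's chain-rule step tacitly uses this too. That regularity already gives, by compactness, a Lipschitz bound in $\zeta$ on $K$ uniform in $y$. Your constant should then be this Lipschitz constant rather than $\|D_\zeta b\|_\infty$. Second, since $Y$ is open, the corner $x_k$ need not lie in $\Om$. Take the centre of the cube instead, or use the continuous extension of $\zeta$ to $\overline{Q_k^\eps}$.
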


\begin{proof}
For fixed $\zeta$, let $\varphi(\cdot,\zeta)$ be the mean-zero periodic
solution of
\[
-\Delta_y\varphi
=
b-\overline b,
\]
and set $\Psi=-\grad_y\varphi$. Periodic elliptic regularity, compactness
of $K$, and the assumed $C^1$ bounds give
\[
\norm{\Psi}_{L^\infty(Y\times K)}
+
\norm{D_\zeta\Psi}_{L^\infty(Y\times K)}
\leq C.
\]
Since $\diver_y\Psi=b-\overline b$,
\[
b\!\left(
\frac{x}{\eps},\zeta(x)
\right)
-
\overline b(\zeta(x))
=
\eps
\diver_x
\Psi\!\left(
\frac{x}{\eps},\zeta(x)
\right)
-
\eps
D_\zeta\Psi\!\left(
\frac{x}{\eps},\zeta(x)
\right):\grad\zeta(x).
\]
Integrate over $\Om$ and apply the divergence theorem.
\end{proof}

\subsection{Residual and squared-loss lower bounds}

Set
\begin{equation}
\label{eq:sf-d-eps}
d_\eps(x)
:=
\mathscr R_\eps[v_\star](x).
\end{equation}
Then
\begin{equation}
\label{eq:sf-decomposition}
d_\eps
=
-\frac1\eps
H\!\left(
\frac{x}{\eps},\grad v_\star
\right)
+
B_\eps,
\end{equation}
where
\[
B_\eps
:=
-
D_\xi A\!\left(
\frac{x}{\eps},\grad v_\star
\right):D^2v_\star
+
g(v_\star).
\]
Define
\[
M_H
:=
\esssup_{x,y}
\left|
H(y,\grad v_\star(x))
\right|,
\qquad
M_B
:=
\sup_{0<\eps\leq1}
\norm{B_\eps}_{L^\infty(\Om)},
\]
and
\begin{equation}
\label{eq:sf-U-mu}
U_\star:=(M_H+M_B)^2,
\qquad
\mu_\star:=\frac{\Theta_\star}{8}.
\end{equation}

\begin{lemma}[Oscillatory mass survives sampling]
\label{lem:sf-sampled-mass}
There exists $\eps_\star>0$ such that, for
$0<\eps\leq\eps_\star$,
\begin{equation}
\label{eq:sf-population-mass}
\frac1{\abs{\Om}}
\int_\Om
\eps^2d_\eps^2\dd x
\geq
\mu_\star.
\end{equation}
If $X_1,\ldots,X_N$ are independent uniform samples on $\Om$ and
\begin{equation}
\label{eq:sf-sample-threshold}
N
\geq
\frac{2U_\star^2}{\mu_\star^2}
\log\frac1\delta,
\end{equation}
then, with probability at least $1-\delta$,
\begin{equation}
\label{eq:sf-empirical-mass}
\frac1N
\sum_{i=1}^N
d_\eps(X_i)^2
\geq
\frac{\mu_\star}{2\eps^2}.
\end{equation}
\end{lemma}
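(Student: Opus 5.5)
The argument has two parts: a deterministic population lower bound obtained from the averaging lemma, and a one-sided concentration bound for bounded variables.

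\emph{Population mass.} Multiply \eqref{eq:sf-decomposition} by $\eps$ to get $\eps d_\eps=-H(x/\eps,\grad v_\star)+\eps B_\eps$. Expanding the square,
\[
\eps^2d_\eps^2\ge \left|H\!\left(\tfrac x\eps,\grad v_\star\right)\right|^2-2\eps M_HM_B,
\]
since the term $\eps^2B_\eps^2$ is nonnegative. Next apply \cref{lem:sf-averaging} with $b(y,\xi):=|H(y,\xi)|^2$, $\zeta:=\grad v_\star\in W^{1,\infty}(\Om;K)$, and $K$ the compact gradient range of $v_\star$ (take its closed convex hull if needed). This $b$ lies in $C^1_{\mathrm{per}}(Y\times K)$ because $H$, $D_yH$ and $D_\xi H$ are bounded there by \cref{ass:sf-general}; continuity comes from $A\in C^2_{\mathrm{per}}$. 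The lemma gives
\[
\frac1{|\Om|}\int_\Om|H(x/\eps,\grad v_\star)|^2\dd x\ \ge\ \Theta_\star-\frac{C_{\mathrm{av}}\eps}{|\Om|}.
\]
Combining the two displays, the normalized population mass is at least $\Theta_\star-(C_{\mathrm{av}}/|\Om|+2M_HM_B)\eps$. Choose $\eps_\star$ so that this correction is at most $\Theta_\star/2$. Then the mass is at least $\Theta_\star/2\ge\mu_\star$, which proves \eqref{eq:sf-population-mass} with room to spare. The only point needing care in this step is the regularity of $b$.

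\emph{Empirical mass.} Put $Y_i:=\eps^2d_\eps(X_i)^2$. These are i.i.d. with $0\le Y_i\le(M_H+\eps M_B)^2\le U_\star$ because $\eps\le1$. Their mean is $\E Y_i\ge\Theta_\star/2=4\mu_\star$, using the sharper form of the first step; at minimum it is $\ge\mu_\star$. Apply the one-sided Hoeffding inequality:
\[
\Pp\!\left(\frac1N\sum_iY_i\le\E Y_1-t\right)\le\exp\!\left(-\frac{2Nt^2}{U_\star^2}\right).
\]
Take $t=\mu_\star/2$. The probability bound becomes $\exp(-N\mu_\star^2/(2U_\star^2))$, which is at most $\delta$ exactly under \eqref{eq:sf-sample-threshold}. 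On the complementary event,
\[
\frac{\eps^2}N\sum_id_\eps(X_i)^2\ge\E Y_1-\frac{\mu_\star}2\ge\frac{\mu_\star}{2}.
\]
Dividing by $\eps^2$ gives \eqref{eq:sf-empirical-mass}.

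The main obstacle is making the averaging step rigorous. It requires the $C^1$ regularity of $b=|H|^2$ in $(y,\xi)$ on $Y\times K$, and that $\zeta=\grad v_\star$ lies in $W^{1,\infty}$, which follows from $v_\star\in W^{2,\infty}$. Once that step is in place, the remainder is bookkeeping of the $O(\eps)$ cross terms.
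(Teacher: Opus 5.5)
Your proof is correct and follows the paper's route. You apply \cref{lem:sf-averaging} to $b=\abs{H}^2$ along $\zeta=\grad v_\star$, absorb the bounded remainder $\eps B_\eps$, and then apply one-sided Hoeffding to $\eps^2d_\eps(X_i)^2\in[0,U_\star]$ with deviation $\mu_\star/2$. The only difference is cosmetic: you bound the cross term by $\abs{a+b}^2\geq\abs{a}^2-2\abs{a}\abs{b}$ (an $O(\eps)$ loss, giving mass at least $\Theta_\star/2$), whereas the paper uses $\abs{a+b}^2\geq\abs{a}^2/2-\abs{b}^2$ (an $O(\eps^2)$ loss, giving exactly $\Theta_\star/8=\mu_\star$).
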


\begin{proof}
Apply \cref{lem:sf-averaging} to
\[
b(y,\xi)=\abs{H(y,\xi)}^2,
\qquad
\zeta(x)=\grad v_\star(x).
\]
For sufficiently small $\eps$,
\[
\frac1{\abs{\Om}}
\int_\Om
\left|
H\!\left(
\frac{x}{\eps},\grad v_\star
\right)
\right|^2
\dd x
\geq
\frac{\Theta_\star}{2}.
\]
Using $\abs{a+b}^2\geq\abs a^2/2-\abs b^2$ in
\eqref{eq:sf-decomposition},
\[
\frac1{\abs{\Om}}
\int_\Om
\eps^2d_\eps^2\dd x
\geq
\frac{\Theta_\star}{4}
-
\eps^2M_B^2.
\]
Choose $\eps_\star$ so that the last term is at most
$\Theta_\star/8$.

Now set $Z_i=\eps^2d_\eps(X_i)^2$. Then
$0\leq Z_i\leq U_\star$ and $\E Z_i\geq\mu_\star$.
Hoeffding's inequality gives
\[
\Pp\left(
\frac1N\sum_{i=1}^NZ_i
<
\frac{\mu_\star}{2}
\right)
\leq
\exp\left(
-\frac{N\mu_\star^2}{2U_\star^2}
\right).
\]
Use \eqref{eq:sf-sample-threshold}.
\end{proof}

For a class $\mathcal F$ and sample $S=(X_i)_{i=1}^N$, define
\begin{equation}
\label{eq:rad-definition}
\Radh_N(\mathcal F;S)
:=
\E_\sigma
\left[
\sup_{h\in\mathcal F}
\frac1N
\sum_{i=1}^N
\sigma_i h(X_i)
\right].
\end{equation}
Because $\mathscr R_\eps[0]=0$, define
\begin{equation}
\label{eq:sf-two-point-classes}
\mathcal R_{\eps,\star}
:=
\{0,d_\eps\},
\qquad
\mathcal L_{\eps,\star}
:=
\{0,d_\eps^2\}.
\end{equation}

\begin{theorem}[General strong-form statistical blow-up]
\label{thm:strong-lower}
Under \cref{ass:sf-general}, if
$0<\eps\leq\eps_\star$ and
\eqref{eq:sf-sample-threshold} holds, then with probability at least
$1-\delta$,
\begin{align}
\Radh_N(\mathcal R_{\eps,\star};S)
&\geq
\frac{\sqrt{\mu_\star}}{4}
\frac1{\eps\sqrt N},
\label{eq:sf-residual-lower}\\
\Radh_N(\mathcal L_{\eps,\star};S)
&\geq
\frac{\mu_\star}{4\sqrt2}
\frac1{\eps^2\sqrt N}.
\label{eq:sf-squared-lower}
\end{align}
\end{theorem}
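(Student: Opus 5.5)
For a two-point class $\{0,h\}$ the supremum is explicit, so the plan is to reduce both bounds to a lower bound on $\E_\sigma\bigl|\sum_i\sigma_ih(X_i)\bigr|$ and then use \cref{lem:sf-sampled-mass}. Writing $T:=\sum_{i=1}^N\sigma_ih(X_i)$, we have
\[
\sup_{g\in\{0,h\}}\frac1N\sum_i\sigma_ig(X_i)=\frac1N\max\{0,T\}.
\]
Because $T$ is symmetric in $\sigma$, it follows that $\E_\sigma\max\{0,T\}=\tfrac12\E_\sigma|T|$. For the lower bound on $\E_\sigma|T|$ I will use the Khintchine inequality with the sharp constant $1/\sqrt2$ (Szarek):
\[
\E_\sigma|T|\ \geq\ \frac1{\sqrt2}\Bigl(\sum_i h(X_i)^2\Bigr)^{1/2}.
\]
This gives
\[
\Radh_N(\{0,h\};S)\ \geq\ \frac{1}{2\sqrt2\,N}\Bigl(\sum_i h(X_i)^2\Bigr)^{1/2}.
\]
If one prefers to avoid Szarek's constant, the Paley--Zygmund route via $\E T^2=\sum h^2$ and $\E T^4\le 3(\sum h^2)^2$ gives $\E|T|\ge(\E T^2)^{3/2}/(\E T^4)^{1/2}\ge \frac1{\sqrt3}(\sum h^2)^{1/2}$ by Hölder, which is an even better constant. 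I will take the crude $1/\sqrt2$ in any case.

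For the residual class, take $h=d_\eps$ and work on the event of \cref{lem:sf-sampled-mass}, which holds with probability at least $1-\delta$ under \eqref{eq:sf-sample-threshold} and $\eps\le\eps_\star$. On that event $\sum_i d_\eps(X_i)^2\ge N\mu_\star/(2\eps^2)$, so
\[
\Radh_N(\mathcal R_{\eps,\star};S)\ \ge\ \frac{1}{2\sqrt2N}\sqrt{\frac{N\mu_\star}{2\eps^2}}=\frac{\sqrt{\mu_\star}}{4}\frac1{\eps\sqrt N},
\]
which is \eqref{eq:sf-residual-lower} exactly.

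For the squared-loss class, take $h=d_\eps^2$. I need a lower bound on $\sum_i d_\eps(X_i)^4$, which I get from Cauchy--Schwarz: $\sum_i d_\eps^4\ge\frac1N\bigl(\sum_i d_\eps^2\bigr)^2\ge \frac1N\cdot\frac{N^2\mu_\star^2}{4\eps^4}$. Hence $\bigl(\sum_i d_\eps^4\bigr)^{1/2}\ge \sqrt N\mu_\star/(2\eps^2)$, and
\[
\Radh_N(\mathcal L_{\eps,\star};S)\ \ge\ \frac{\mu_\star}{4\sqrt2}\frac1{\eps^2\sqrt N},
\]
which matches \eqref{eq:sf-squared-lower}. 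Both bounds hold on the same single event, so no union bound is needed.

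The only real obstacle is the Khintchine lower constant. The constants in the statement ($1/4$ and $1/(4\sqrt2)$) come out exactly with the factor $1/\sqrt2$, so I will cite Szarek's sharp $L^1$ Khintchine inequality. As a fallback I will give the self-contained Paley--Zygmund/Hölder argument above, which gives the stronger factor $1/\sqrt3$ and therefore implies the stated constants. All the $\eps$-dependence enters through \cref{lem:sf-sampled-mass}, which was already proved.
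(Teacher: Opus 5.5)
Your argument is the paper's proof: the symmetry reduction $\Radh_N(\{0,h\};S)=\frac1{2N}\E_\sigma\bigl|\sum_i\sigma_ih(X_i)\bigr|$, Szarek's $L^1$ Khintchine constant $1/\sqrt2$, the empirical-mass event of \cref{lem:sf-sampled-mass}, and Cauchy--Schwarz for the squared class, all on one event, give exactly the stated constants. One correction to your side remark: the Paley--Zygmund/H\"older fallback is weaker, not stronger, because $1/\sqrt3<1/\sqrt2$. Also, $1/\sqrt2$ is the sharp constant, not a crude one. That route therefore gives only the same rates with constants $\sqrt{\mu_\star}/(2\sqrt6)$ and $\mu_\star/(4\sqrt3)$, and would not by itself prove the statement as written.
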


\begin{proof}
By symmetry of the Rademacher sum,
\begin{align*}
\Radh_N(\mathcal R_{\eps,\star};S)
&=
\frac1{2N}
\E_\sigma
\left|
\sum_{i=1}^N
\sigma_i d_\eps(X_i)
\right|
\\
&\geq
\frac1{2\sqrt2\,N}
\left(
\sum_{i=1}^N
d_\eps(X_i)^2
\right)^{1/2}.
\end{align*}
Use \eqref{eq:sf-empirical-mass}.

Similarly,
\[
\Radh_N(\mathcal L_{\eps,\star};S)
\geq
\frac1{2\sqrt2\,N}
\left(
\sum_{i=1}^N
d_\eps(X_i)^4
\right)^{1/2}.
\]
Cauchy--Schwarz gives
\[
\left(
\sum_i d_\eps(X_i)^4
\right)^{1/2}
\geq
\frac1{\sqrt N}
\sum_i d_\eps(X_i)^2.
\]
Use \eqref{eq:sf-empirical-mass} again.
\end{proof}

\begin{remark}[Scope with respect to adapted trial classes]
	\label{rem:adapted-classes}
	Theorem~\ref{thm:strong-lower} uses a fixed state
	\(v_\star\), independent of \(\eps\), and the two-point class generated by
	its residual. Consequently, any larger strong-form class containing
	\(0\) and \(v_\star\) inherits the same lower bound. The theorem does not
	apply directly to families \(v_\eps\) designed using the microscopic
	coefficient. In special structured settings, such as one-dimensional
	layered linear media, coefficient-adapted harmonic coordinates can cancel
	the leading \(\eps^{-1}\) contribution. Whether analogous cancellations
	exist for general nonlinear multidimensional fluxes, and whether they can
	be achieved without increasing approximation or model complexity, remains
	open.
\end{remark}

\begin{remark}[Formulation-level separation]
The lower bounds are properties of the sampled function classes and are independent of the optimizer. By contrast, the primal and primal--dual sampling estimates involve bounded state and flux envelopes but no derivative in the fast variable. Thus, under the additional regularity and nondegeneracy assumptions of this section, the strong obstruction and the scale-robust upper theory apply to the same variationally generated nonlinear periodic flux class.
\end{remark}

\section{Numerical validation}
\label{sec:numerics}

The numerical study tests the finite-sample signatures of the theory rather than attempting to replace its analytical assumptions.  We address six questions: whether the predicted powers of the microscopic scale $\eps$ are observed; whether the sample-size dependence is $N^{-1/2}$; whether the appropriately normalized complexities collapse across simultaneous changes of $(\eps,N)$; whether the exponents persist for genuinely nonlinear fluxes and in dimensions $d=1,2,3$; whether the oscillatory-mass event used in \cref{lem:sf-sampled-mass} concentrates at a scale-independent sample size; and whether the computable primal--dual gap remains above the true energy error while corrector enrichment improves state approximation.

\subsection{Models, estimators, and reproducibility}
\label{subsec:num-setup}

\paragraph{Baseline strong-form witness.}
For the scale and sample-size experiments we use $\Om=(0,1)$,
\begin{equation}
\label{eq:num-baseline-model}
A(y,\xi)=a(y)\xi,
\qquad
a(y)=2+\sin(2\pi y),
\qquad
g(s)=s+s^3,
\end{equation}
and
\begin{equation}
\label{eq:num-baseline-witness}
\psi(x)=x(1-x),
\qquad
v_c(x)=c\psi(x),
\qquad
\abs c\leq\kappa,
\qquad
\kappa=1.
\end{equation}
The strong residual and energy density are
\begin{align}
r_c^\eps(x)
&=
-\frac{c}{\eps}
a'\!\left(\frac{x}{\eps}\right)\psi'(x)
+2c\,a\!\left(\frac{x}{\eps}\right)
+c^3\psi(x)^3+c\psi(x),
\label{eq:num-residual}\\
e_c^\eps(x)
&=
\frac12a\!\left(\frac{x}{\eps}\right)c^2\psi'(x)^2
+\frac12c^2\psi(x)^2
+\frac14c^4\psi(x)^4.
\label{eq:num-energy-density}
\end{align}
Writing $d_\eps=r_\kappa^\eps$, we evaluate the symmetric residual class
$\{-d_\eps,d_\eps\}$, the squared-loss class $\{0,d_\eps^2\}$, and the
energy class $\mathcal E_\eps=\{e_c^\eps:\abs c\leq\kappa\}$.  For a spatial
sample $S=(X_i)_{i=1}^N$, their empirical complexities are estimated as
\begin{align}
\widehat{\mathfrak R}_{N}^{\rm res}
&=
\E_\sigma
\left|
\frac1N\sum_{i=1}^N\sigma_i d_\eps(X_i)
\right|,
\label{eq:num-res-estimator}\\
\widehat{\mathfrak R}_{N}^{\rm sq}
&=
\frac12\E_\sigma
\left|
\frac1N\sum_{i=1}^N\sigma_i d_\eps(X_i)^2
\right|.
\label{eq:num-sq-estimator}
\end{align}
For the energy class, $t=c^2\in[0,\kappa^2]$ gives
$e_c^\eps=tA_\eps+t^2B$.  For each Rademacher vector, the signed empirical
sum is therefore a scalar quadratic polynomial in $t$; its supremum is
computed exactly from the two endpoints and any stationary point contained
in $[0,\kappa^2]$.  Thus Monte Carlo error enters only through the
Rademacher expectation, not through the inner supremum.

\paragraph{Nonlinear multidimensional robustness model.}
To test that the strong-form exponents are not artifacts of linearity or of
one space dimension, we use $\Om=(0,1)^d$ and
\begin{align}
A_j(y,\xi)
&=
a(y_j)\xi_j
+\gamma b(y_j)\tanh(\xi_j),
\qquad
b(y)=1+\frac14\cos(2\pi y),
\label{eq:num-nonlinear-flux}\\
\phi_d(x)
&=
\prod_{j=1}^d x_j(1-x_j),
\qquad
v_c(x)=c\phi_d(x).
\label{eq:num-tensor-witness}
\end{align}
Because
$\partial_{\xi_j}A_j=a(y_j)+\gamma b(y_j)\operatorname{sech}^2(\xi_j)>0$,
this is a genuinely nonlinear uniformly monotone flux for every tested
$\gamma$.  We evaluate $d\in\{1,2,3\}$ and
$\gamma\in\{0,0.4,0.8,1.2\}$.

\paragraph{Primal--dual certificate problem.}
The certificate experiment solves
\begin{equation}
\label{eq:num-certificate-pde}
-\frac{\dd}{\dd x}
\left[
a\!\left(\frac{x}{\eps}\right)u^{\eps\prime}(x)
\right]
+(u^\eps)^3+u^\eps
=1,
\qquad
u^\eps(0)=u^\eps(1)=0.
\end{equation}
A conservative midpoint-flux finite-difference discretization with
$16\,384$ cells provides the reference solution.  The nonlinear algebraic
system is solved by damped Newton iteration with an Armijo line search,
residual tolerance $10^{-10}$, Newton-step tolerance $10^{-12}$, at most
$80$ iterations, and minimum line-search step $2^{-24}$.  Population
integrals are evaluated on $16\,384$ midpoint nodes.

The coefficient-blind state class is
\begin{equation}
\label{eq:num-blind-class}
v_\beta(x)=\sum_{j=1}^m\beta_j\sin(j\pi x).
\end{equation}
For the enriched class, let $a_{\rm hom}=(\int_0^1a^{-1})^{-1}$ and let the
mean-zero periodic corrector satisfy
$\chi'(y)=a_{\rm hom}/a(y)-1$.  With a $C^1$ cutoff $\eta_\eps$ that vanishes
within distance $\eps$ of the boundary and equals one beyond distance
$2\eps$, the enriched features are
\begin{equation}
\label{eq:num-enriched-class}
z_j^\eps(x)
=
\sin(j\pi x)
+\eps\eta_\eps(x)\chi(x/\eps)
\frac{\dd}{\dd x}\sin(j\pi x).
\end{equation}
The corrector is tabulated on $65\,536$ periodic grid cells.  The flux class
contains a constant and sine/cosine harmonics up to order
$\max\{m,4\}$, giving $1+2\max\{m,4\}$ flux coefficients.  Both the primal
energy and dual functional are minimized by L--BFGS--B with analytic
gradients, function tolerance $10^{-13}$, gradient tolerance $10^{-10}$,
and at most $4000$ iterations.  The dual conjugate of
$G(s)=s^2/2+s^4/4$ is evaluated from the real solution of $s+s^3=r$.

\paragraph{Randomization and uncertainty quantification.}
All experiments use one NumPy random generator initialized with the master
seed
\begin{equation}
\label{eq:num-seed}
\texttt{20260717}.
\end{equation}
Independent spatial samples are regenerated across replicates.  Reported
means use independent spatial-sample replicates, and their displayed
$95\%$ intervals are normal intervals based on the replicate standard
error.  Log--log slopes are fitted by ordinary least squares and accompanied
by percentile intervals from $3000$ joint bootstrap resamples of the spatial
replicates.  The full run used Python~3.13.9 and NumPy~2.3.5 under Anaconda on
Windows~11 and completed in $625.3$ seconds; the runtime is reported only for
reproducibility because the hardware was not recorded.

All parameters are collected in \cref{tab:num-settings}.  The symbol
$M_\sigma$ denotes the number of Rademacher sign vectors, $R_x$ the number of
independent spatial-sample replicates, and $R_b$ the number of bootstrap
resamples.

\begin{table}[!htbp]
\centering
\caption{Complete configuration of the full numerical run.  Ranges written
as $2^{-a{:}b}$ include every integer exponent from $a$ to $b$.}
\label{tab:num-settings}
\small
\renewcommand{\arraystretch}{1.13}
\begin{tabular}{@{}p{0.25\textwidth}p{0.28\textwidth}p{0.39\textwidth}@{}}
\toprule
Experiment & Quantity & Value \\
\midrule
Global & master seed; amplitude & $20260717$; $\kappa=1$ \\
$\eps$ scaling & $\eps$; $N$; $R_x$; $M_\sigma$; $R_b$
& $2^{-3{:}9}$; $2048$; $16$; $4096$; $3000$ \\
$N$ scaling & fixed $\eps$; $N$; $R_x$; $M_\sigma$; $R_b$
& $2^{-8}$; $128,256,512,1024,2048,4096$; $16$; $4096$; $3000$ \\
Normalized collapse & $\eps$; $N$; $R_x$; $M_\sigma$
& $2^{-3{:}8}$; $256,512,1024,2048,4096$; $10$; $2048$ \\
Dimension/nonlinearity & $d$; $\gamma$; $\eps$; $N$; $R_x$; $M_\sigma$
& $1,2,3$; $0,0.4,0.8,1.2$; $2^{-3{:}7}$; $2048$; $10$; $2048$ \\
Concentration & $\eps$; $N$; trials; population grid
& $2^{-3{:}8}$; $32,64,128,256,512,1024,2048$; $3000$; $262\,144$ \\
Primal--dual & $\eps$; $m$; forcing; reference cells; quadrature nodes
& $2^{-3{:}7}$; $4,8,12,16$; $1$; $16\,384$; $16\,384$ \\
Primal--dual & corrector cells; L--BFGS tolerances; maximum iterations
& $65\,536$; $(10^{-13},10^{-10})$; $4000$ \\
\bottomrule
\end{tabular}
\end{table}

\subsection{Microscopic-scale and sample-size laws}
\label{subsec:num-scaling}

\Cref{fig:num-eps-scaling} displays the empirical complexities against
$1/\eps$ at fixed $N=2048$.  The residual and squared-loss curves are nearly
straight on log--log axes, while the energy curve remains flat.  At fixed
$\eps=2^{-8}$, \cref{fig:num-N-scaling} shows the common $N^{-1/2}$ decay.
The fitted slopes, bootstrap intervals, and coefficients of determination
are reported in \cref{tab:num-scaling-slopes}.

The estimates are
$0.9969$ for the residual and $1.9937$ for the squared loss, with bootstrap
intervals that tightly bracket the theoretical values $1$ and $2$.  The
energy slope is $-0.0011$, and its interval contains zero.  Its low
$R^2=0.0475$ is expected: the dependent variable is essentially constant,
so the remaining replicate variation dominates the negligible fitted trend.
For sample size, all three estimates lie within $0.004$ of $-1/2$, and all
three fits have $R^2>0.9997$.

\begin{table}[!htbp]
\centering
\caption{Log--log scaling estimates from the full run.  CI denotes the
percentile $95\%$ interval from $3000$ bootstrap resamples of the spatial
replicates.}
\label{tab:num-scaling-slopes}
\small
\begin{tabular}{@{}llrrrr@{}}
\toprule
Varied quantity & Observable & fitted slope & bootstrap CI & $R^2$ & predicted \\
\midrule
$1/\eps$ & strong residual & $0.9969$ & $[0.9952,0.9986]$ & $0.999978$ & $1$ \\
$1/\eps$ & squared strong loss & $1.9937$ & $[1.9904,1.9973]$ & $0.999981$ & $2$ \\
$1/\eps$ & variational energy & $-0.0011$ & $[-0.0046,0.0025]$ & $0.047505$ & $0$ \\
$N$ & strong residual & $-0.4979$ & $[-0.5031,-0.4925]$ & $0.999748$ & $-1/2$ \\
$N$ & squared strong loss & $-0.4964$ & $[-0.5046,-0.4874]$ & $0.999778$ & $-1/2$ \\
$N$ & variational energy & $-0.4995$ & $[-0.5063,-0.4924]$ & $0.999761$ & $-1/2$ \\
\bottomrule
\end{tabular}
\end{table}

\begin{figure}[!htbp]
\centering
\begin{subfigure}[t]{0.32\textwidth}
\centering
\includegraphics[width=\linewidth]{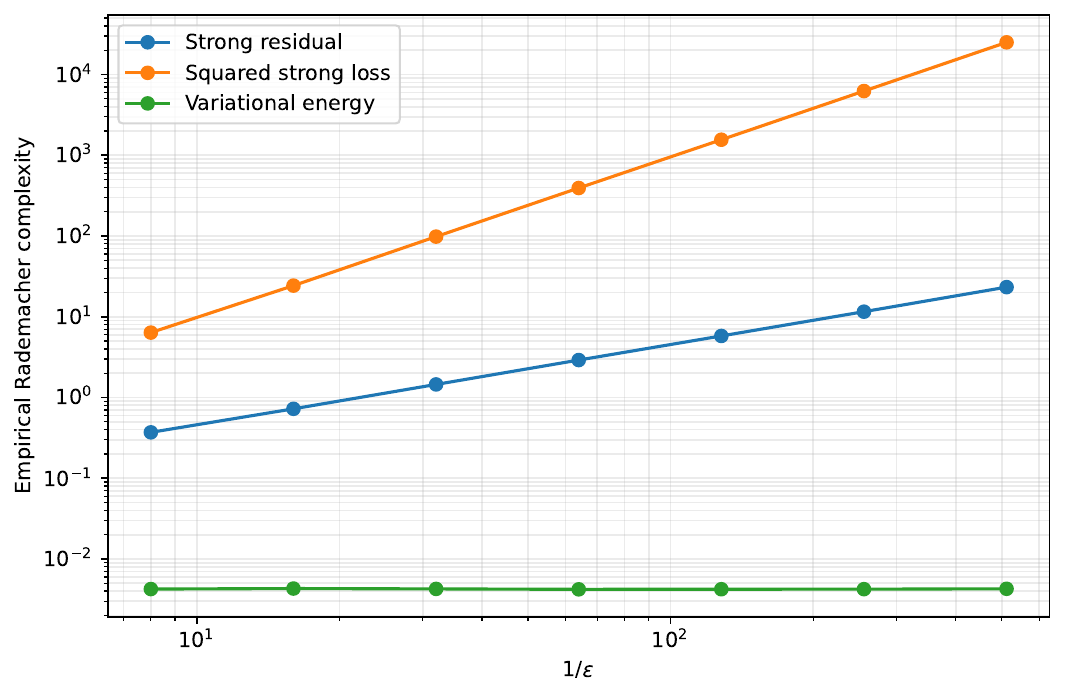}
\caption{Dependence on $1/\eps$ at $N=2048$.}
\label{fig:num-eps-scaling}
\end{subfigure}\hfill
\begin{subfigure}[t]{0.32\textwidth}
\centering
\includegraphics[width=\linewidth]{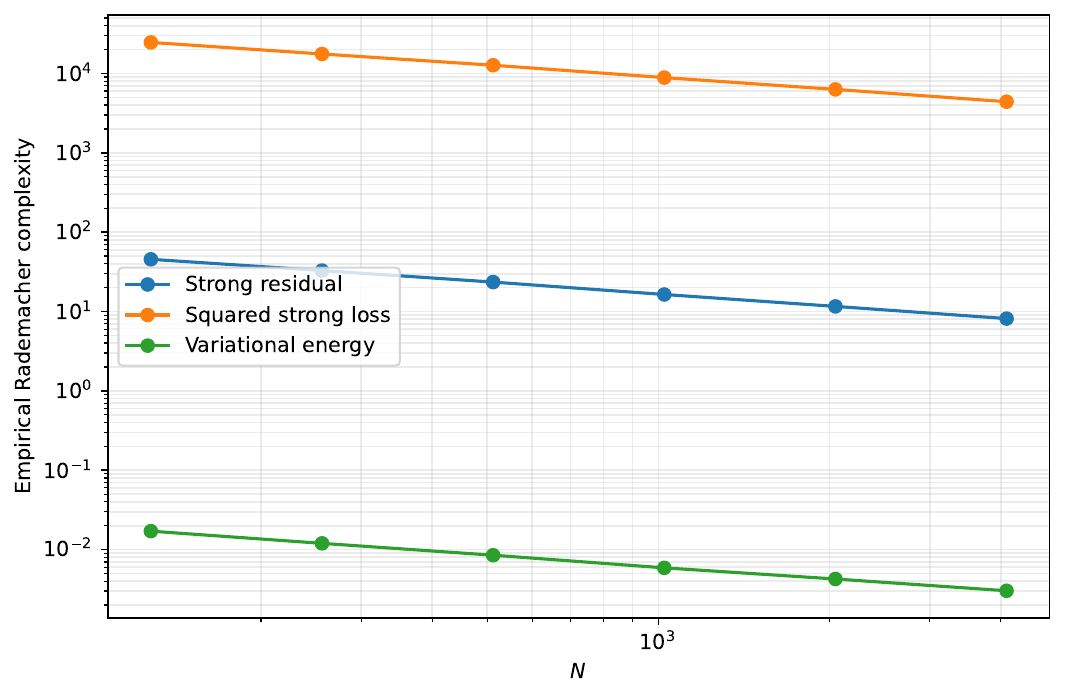}
\caption{Dependence on $N$ at $\eps=2^{-8}$.}
\label{fig:num-N-scaling}
\end{subfigure}\hfill
\begin{subfigure}[t]{0.32\textwidth}
\centering
\includegraphics[width=\linewidth]{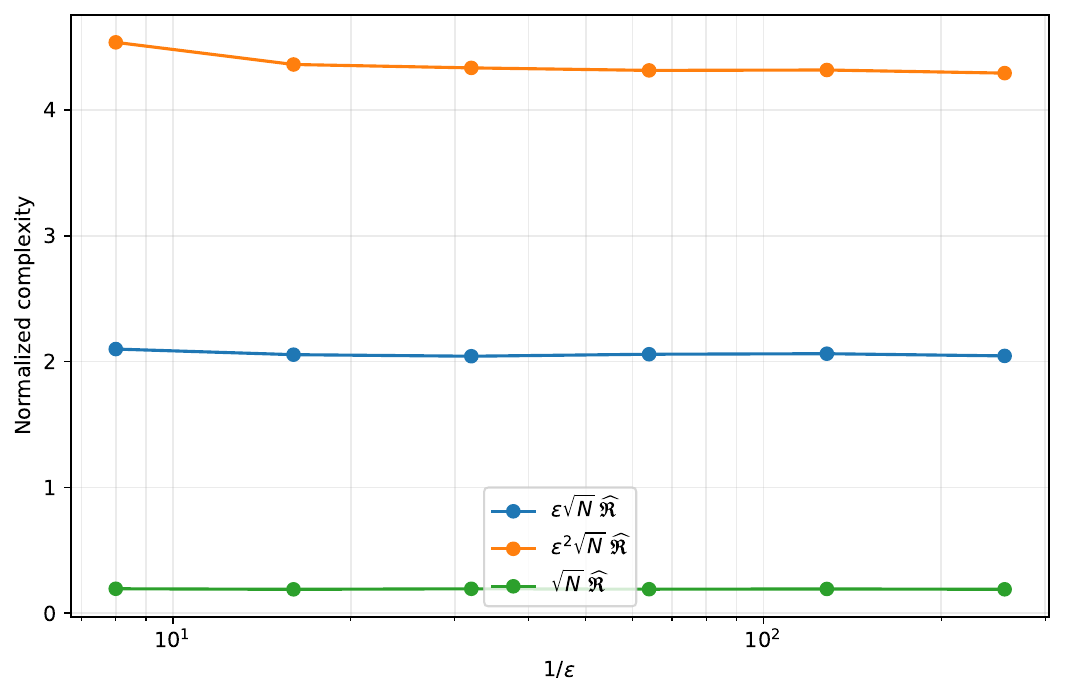}
\caption{Two-parameter normalized collapse.}
\label{fig:num-collapse}
\end{subfigure}
\caption{Finite-sample scaling of the strong residual, squared strong loss,
and variational energy.  Panels (a) and (b) verify the separate $\eps$ and
$N$ powers; panel (c) tests both variables simultaneously.}
\label{fig:num-scaling}
\end{figure}

\subsection{Normalized collapse and quantitative lower bounds}
\label{subsec:num-collapse}

A stronger test than two separate regressions is obtained by rescaling the
observables according to the theorem.  Across the $6\times5$ grid of
$(\eps,N)$ values, \cref{fig:num-collapse} shows that
\begin{equation}
\label{eq:num-collapse-quantities}
\eps\sqrt N\,\widehat{\mathfrak R}_{N}^{\rm res},
\qquad
\eps^2\sqrt N\,\widehat{\mathfrak R}_{N}^{\rm sq},
\qquad
\sqrt N\,\widehat{\mathfrak R}_{N}^{\rm en}
\end{equation}
are nearly constant.  Their coefficients of variation are only
$1.34\%$, $2.52\%$, and $1.46\%$, respectively, as summarized in
\cref{tab:num-collapse-bounds}.

The same table compares the empirical values with the explicit constants in
\cref{thm:strong-lower}.  The residual complexity is between $9.01$ and
$9.24$ times its proved lower bound, and the squared-loss complexity is
between $29.56$ and $30.96$ times its proved lower bound.  The purpose is not
to claim sharp constants, but to show that the ratio remains stable as
$\eps$ changes by a factor of $64$.

\begin{table}[!htbp]
\centering
\caption{Normalized-collapse statistics and ratios to the explicit theorem
lower bounds.  CV is computed over all $30$ tested $(\eps,N)$ pairs.}
\label{tab:num-collapse-bounds}
\small
\begin{tabular}{@{}lrrr@{}}
\toprule
Observable & normalized mean & CV & empirical/theorem range \\
\midrule
strong residual & $2.0605$ & $1.34\%$ & $[9.014,9.237]$ \\
squared strong loss & $4.3599$ & $2.52\%$ & $[29.557,30.958]$ \\
variational energy & $0.19160$ & $1.46\%$ & --- \\
\bottomrule
\end{tabular}
\end{table}

\subsection{Robustness to nonlinearity and dimension}
\label{subsec:num-dimension}

The nonlinear flux \eqref{eq:num-nonlinear-flux} probes the general setting
of \cref{thm:strong-lower}.  The fitted slopes in
\cref{fig:num-dimension} and \cref{tab:num-dim-nonlinearity} remain close to
$1$ and $2$ for every combination of $d$ and $\gamma$.  The largest absolute
deviations are $0.0368$ for the residual and $0.0839$ for the squared loss,
and the smallest coefficients of determination are $0.999410$ and
$0.999026$, respectively.  The modest downward drift at larger $d$ and
$\gamma$ is a finite-scale effect; it does not alter the observed separation
between first- and second-order powers.

\begin{table}[!htbp]
\centering
\caption{Fitted powers of $1/\eps$ for the genuinely nonlinear flux
\eqref{eq:num-nonlinear-flux}.}
\label{tab:num-dim-nonlinearity}
\small
\begin{tabular}{@{}rrrrrr@{}}
\toprule
$\gamma$ & $d$ & residual slope & residual $R^2$ & squared slope & squared $R^2$ \\
\midrule
$0.0$ & $1$ & $0.9931$ & $0.999953$ & $1.9884$ & $0.999965$ \\
$0.0$ & $2$ & $0.9899$ & $0.999927$ & $1.9851$ & $0.999969$ \\
$0.0$ & $3$ & $0.9859$ & $0.999917$ & $1.9596$ & $0.999822$ \\
$0.4$ & $1$ & $0.9889$ & $0.999981$ & $1.9845$ & $0.999975$ \\
$0.4$ & $2$ & $0.9852$ & $0.999929$ & $1.9593$ & $0.999891$ \\
$0.4$ & $3$ & $0.9738$ & $0.999801$ & $1.9420$ & $0.999817$ \\
$0.8$ & $1$ & $0.9854$ & $0.999908$ & $1.9734$ & $0.999945$ \\
$0.8$ & $2$ & $0.9813$ & $0.999702$ & $1.9506$ & $0.999722$ \\
$0.8$ & $3$ & $0.9706$ & $0.999605$ & $1.9251$ & $0.999241$ \\
$1.2$ & $1$ & $0.9880$ & $0.999961$ & $1.9793$ & $0.999950$ \\
$1.2$ & $2$ & $0.9700$ & $0.999738$ & $1.9342$ & $0.999560$ \\
$1.2$ & $3$ & $0.9632$ & $0.999410$ & $1.9161$ & $0.999026$ \\
\bottomrule
\end{tabular}
\end{table}

\subsection{Sampling concentration}
\label{subsec:num-concentration}

For each $\eps$, let
\begin{equation}
\label{eq:num-concentration-event}
\mathcal F_{\eps,N}
:=
\left\{
\frac1N\sum_{i=1}^N
\bigl(\eps d_\eps(X_i)\bigr)^2
<
\frac12\E\bigl[(\eps d_\eps(X))^2\bigr]
\right\}.
\end{equation}
The population expectation is approximated with $262\,144$ midpoint nodes,
and the failure probability is estimated from $3000$ independent samples
for each $(\eps,N)$.  As shown in \cref{fig:num-concentration-panel} and
\cref{tab:num-concentration}, the largest observed failure probability is
$1.033\%$ at $N=32$, falls below $3.34\times10^{-4}$ at $N=64$, and is zero
in all $18\,000$ trials performed at each $N\geq128$ across the six values of
$\eps$.  This directly supports the scale-independent concentration
mechanism in \cref{lem:sf-sampled-mass}; in particular, the experiment does
not indicate a requirement $N\gtrsim\eps^{-1}$.

\begin{table}[!htbp]
\centering
\caption{Empirical probability of the lower-tail event
\eqref{eq:num-concentration-event}, aggregated over the six tested values of
$\eps$.  A reported zero means that no failure occurred in the corresponding
$6\times3000$ trials.}
\label{tab:num-concentration}
\small
\begin{tabular}{@{}rrr@{}}
\toprule
$N$ & mean failure probability & maximum failure probability \\
\midrule
$32$ & $8.389\times10^{-3}$ & $1.033\times10^{-2}$ \\
$64$ & $2.222\times10^{-4}$ & $3.333\times10^{-4}$ \\
$128$ & $0$ & $0$ \\
$256$ & $0$ & $0$ \\
$512$ & $0$ & $0$ \\
$1024$ & $0$ & $0$ \\
$2048$ & $0$ & $0$ \\
\bottomrule
\end{tabular}
\end{table}

\begin{figure}[!htbp]
\centering
\begin{subfigure}[t]{0.49\textwidth}
\centering
\includegraphics[width=\linewidth]{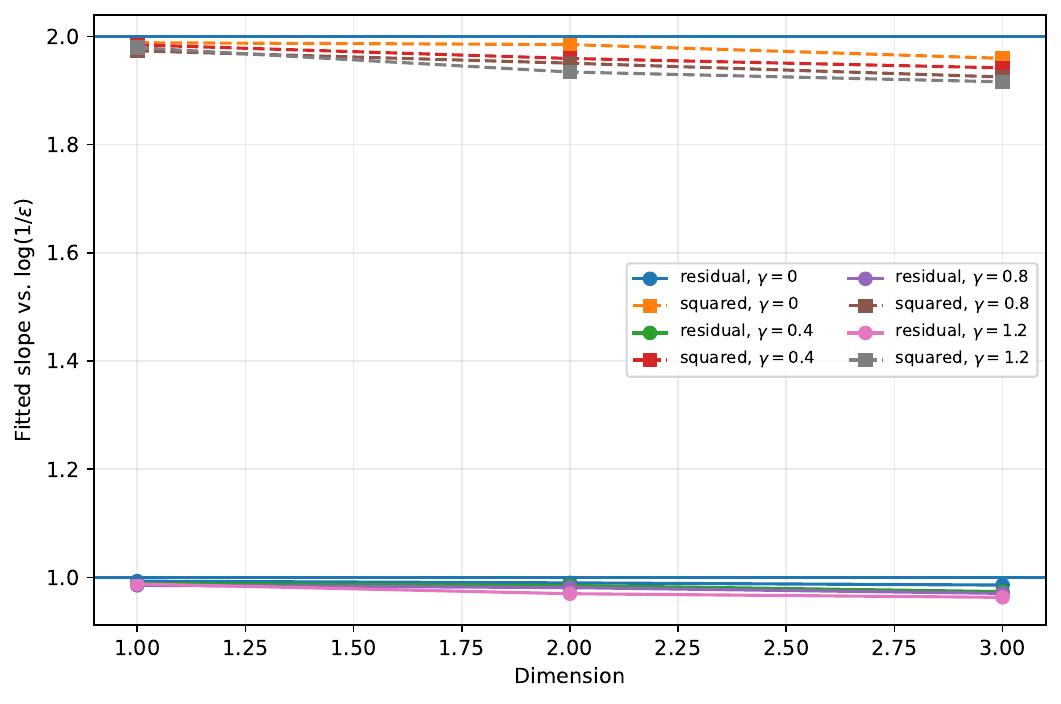}
\caption{Exponents versus dimension and nonlinear strength.}
\label{fig:num-dimension}
\end{subfigure}\hfill
\begin{subfigure}[t]{0.49\textwidth}
\centering
\includegraphics[width=\linewidth]{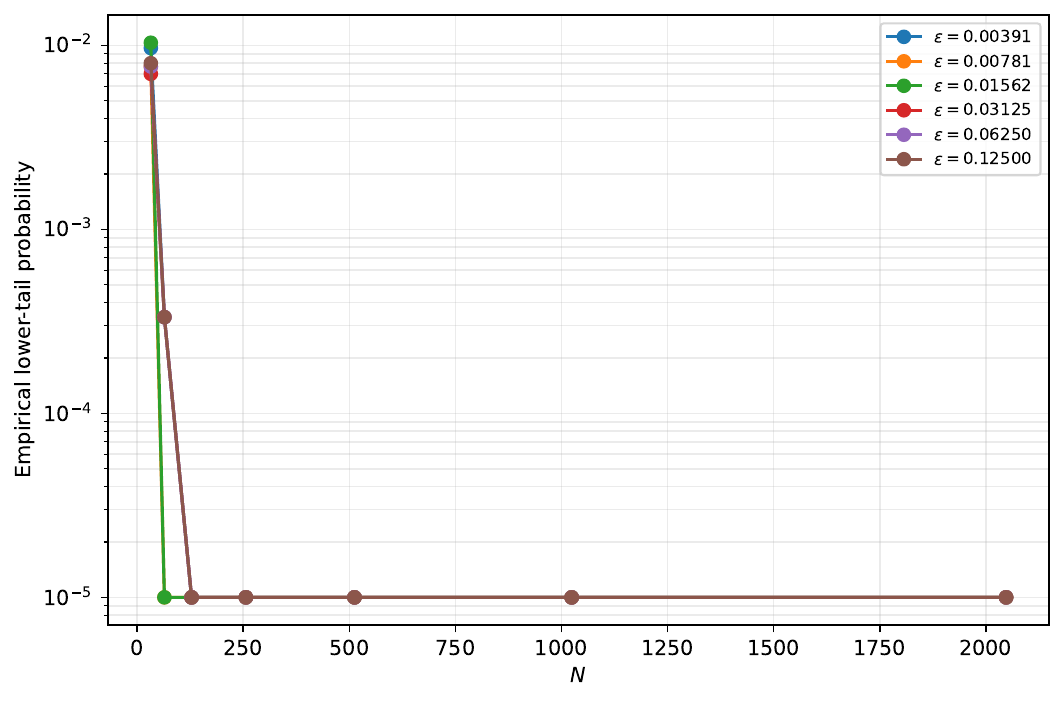}
\caption{Lower-tail concentration across $\eps$.}
\label{fig:num-concentration-panel}
\end{subfigure}
\caption{Robustness of the strong-form result.  Panel (a) tests nonlinear
fluxes in dimensions $1$--$3$; panel (b) tests the empirical-mass event used
in the proof.}
\label{fig:num-robustness}
\end{figure}

\subsection{Primal--dual reliability and corrector enrichment}
\label{subsec:num-primal-dual}

For every tested state and flux pair, the computed gap satisfies
\begin{equation}
\label{eq:num-certificate-inequality}
\mathfrak G_\eps(v_\beta,p_\gamma)
\geq
\J_\eps(v_\beta)-\J_\eps(u^\eps).
\end{equation}
The parity plot in \cref{fig:num-certificate-panel} contains all $40$ state
models.  The minimum certificate margin is
$6.43\times10^{-9}$, and the ratio of gap to true energy error lies in
$[1.000002,1.042244]$.  Thus the numerical dual approximation is sufficiently
accurate that the gap is nearly exact while remaining on the reliable side
of the diagonal.

\Cref{fig:num-state-panel} compares the state errors.  The coefficient-blind
class plateaus as $\eps$ decreases, whereas the corrector-enriched class
improves with the microscopic scale.  The width-$16$ values are reported in
\cref{tab:num-certificate}.  At $\eps=2^{-7}$, corrector enrichment reduces
the energy error by a factor $14.55$ and the squared $H^1$ error by a factor
$16.23$.  The increasing improvement factor as $\eps\downarrow0$ is the
expected signature of a class that resolves the microscopic gradient rather
than approximating only the macroscopic component.

\begin{table}[!htbp]
\centering
\caption{Primal and primal--dual results at state width $m=16$.  ``Factor''
is the coefficient-blind error divided by the corrector-enriched error.}
\label{tab:num-certificate}
\small
\resizebox{\textwidth}{!}{%
\begin{tabular}{@{}rrrrrrrr@{}}
\toprule
$\eps$ & blind energy & enriched energy & factor & blind $H^1$ sq. & enriched $H^1$ sq. & factor & gap/error \\
\midrule
$2^{-3}$ & $1.338\times10^{-3}$ & $9.52\times10^{-4}$ & $1.41$ & $1.826\times10^{-3}$ & $1.300\times10^{-3}$ & $1.41$ & $1.000025$ \\
$2^{-4}$ & $2.770\times10^{-3}$ & $1.183\times10^{-3}$ & $2.34$ & $4.076\times10^{-3}$ & $1.708\times10^{-3}$ & $2.39$ & $1.000144$ \\
$2^{-5}$ & $2.877\times10^{-3}$ & $6.92\times10^{-4}$ & $4.15$ & $4.240\times10^{-3}$ & $9.95\times10^{-4}$ & $4.26$ & $1.000101$ \\
$2^{-6}$ & $2.896\times10^{-3}$ & $3.76\times10^{-4}$ & $7.70$ & $4.274\times10^{-3}$ & $5.25\times10^{-4}$ & $8.15$ & $1.000100$ \\
$2^{-7}$ & $2.901\times10^{-3}$ & $1.99\times10^{-4}$ & $14.55$ & $4.282\times10^{-3}$ & $2.64\times10^{-4}$ & $16.23$ & $1.000166$ \\
\bottomrule
\end{tabular}}
\end{table}

\begin{figure}[!htbp]
\centering
\begin{subfigure}[t]{0.49\textwidth}
\centering
\includegraphics[width=\linewidth]{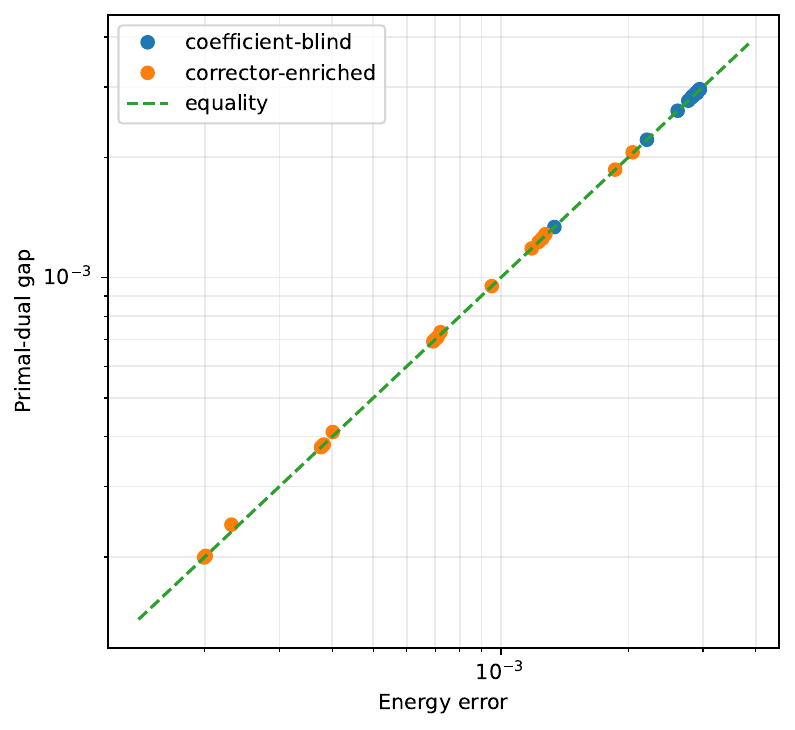}
\caption{Primal--dual gap against the true energy error.}
\label{fig:num-certificate-panel}
\end{subfigure}\hfill
\begin{subfigure}[t]{0.49\textwidth}
\centering
\includegraphics[width=\linewidth]{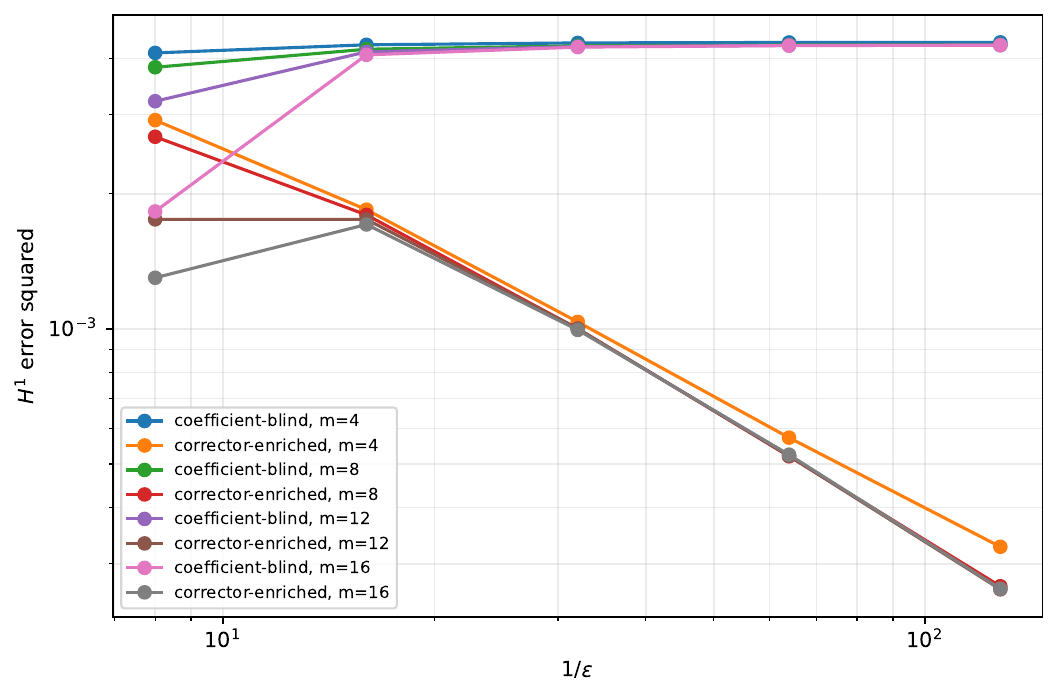}
\caption{Coefficient-blind and enriched state errors.}
\label{fig:num-state-panel}
\end{subfigure}
\caption{Numerical validation of the primal--dual theory.  Panel (a) verifies
the reliability inequality; panel (b) shows the effect of multiscale
corrector enrichment.}
\label{fig:num-primal-dual}
\end{figure}

\subsection{Summary and limitations of the numerical evidence}
\label{subsec:num-summary}

Figures~\ref{fig:num-scaling}, \ref{fig:num-robustness}, and \ref{fig:num-primal-dual}, together with Tables~\ref{tab:num-scaling-slopes}, \ref{tab:num-collapse-bounds}, \ref{tab:num-dim-nonlinearity}, \ref{tab:num-concentration}, and \ref{tab:num-certificate}, provide five independent forms of evidence, i.e., direct power-law regression, joint $(\eps,N)$ collapse, comparison with explicit lower-bound constants, robustness to dimension and flux nonlinearity, and numerical verification of the primal--dual certificate.

These computations support the finite-sample consequences of Theorems~\ref{thm:strong-lower} and~\ref{thm:pd-closed}. They do not prove the quantitative homogenization estimate or the additional flux-corrector regularity assumptions used in the analytical closure; those remain explicit hypotheses of the theory.

\section{Conclusion}
\label{sec:conclusion}

We established a non-asymptotic learning theory for uniformly monotone nonlinear multiscale elliptic equations that separates approximation, sampling, finite-iteration optimization, and formulation effects.

For the primal variational formulation, the state error satisfies an explicit approximation--sampling--optimization decomposition whose stability, empirical-quadrature, and projected-gradient constants are independent of the microscopic scale. Under a stated quantitative periodic corrector estimate, a two-scale state class closes the remaining approximation term in arbitrary spatial dimension.

The primal--dual formulation supplements the state approximation with a computable error certificate and a learned flux, without differentiating the microscopic coefficient. Under the additional flux-corrector regularity assumption, a skew-symmetric nonlinear flux corrector yields an \(H(\diver)\)-approximation of the exact flux, while microscopically divergence-free flux features preserve \(\eps\)-uniform sampling and optimization constants. This produces a conditional but closed state--flux end-to-end estimate in which all remaining scale dependence is confined to explicit state and flux approximation errors.

The strong formulation behaves differently. Whenever the microscopic flux divergence is nondegenerate along one admissible trial state, the empirical complexities of the strong residual and its squared loss grow at least as \(\eps^{-1}N^{-1/2}\) and \(\eps^{-2}N^{-1/2}\), respectively. These lower bounds are properties of the sampled function classes, are independent of the optimizer, and hold in every spatial dimension.

The numerical study confirms all observable scaling predictions. The fitted microscopic-scale exponents are \(0.9969\), \(1.9937\), and \(-0.0011\) for the residual, squared residual, and variational energy, respectively, while the corresponding sample-size exponents are \(-0.4979\), \(-0.4964\), and \(-0.4995\). After multiplication by the predicted powers of \(\eps\) and \(N\), the complexities exhibit a stable data collapse across the tested parameter grid. The same strong-form exponents persist for genuinely nonlinear monotone fluxes in dimensions \(d=1,2,3\), and the empirical lower-tail probabilities decrease with increasing sample size, consistently with the concentration argument.

The primal--dual computations provide complementary evidence. Every computed gap remains above the corresponding energy error, with certificate ratios close to one throughout the tested regime. Corrector enrichment also improves approximation as the scale is refined: at \(m=16\) and \(\eps=2^{-7}\), it reduces the energy error by a factor of \(14.55\) and the squared \(H^1\) error by a factor of \(16.23\) relative to the coefficient-blind class. These experiments do not replace the homogenization or flux-corrector assumptions; rather, they verify the finite-sample, dimensional, nonlinear, and certification signatures predicted by the analysis.

The strong-form result should therefore be interpreted as an obstruction for classes containing an \(\eps\)-independent nondegenerate witness, not as a universal impossibility theorem for every coefficient-adapted architecture. Characterizing the precise dichotomy between unavoidable statistical blow-up and successful microscopic adaptation is a natural direction for further work.

Therefore, the theory and computations identify the mechanism behind multiscale statistical ill-conditioning. Differentiating the fast coefficient amplifies the sampled strong loss, whereas variational and primal--dual formulations avoid that amplification. Corrector-enriched state classes and divergence-compatible flux classes then convert this formulation-level advantage into scale-robust non-asymptotic error bounds.

\bibliographystyle{unsrtnat}
\bibliography{refs1}
\end{document}